\DeclareMathOperator{\Aut}{Aut}
\DeclareMathOperator{\Hom}{Hom}
\DeclareMathOperator{\End}{End}
\DeclareMathOperator{\ev}{ev}
\DeclareMathOperator{\diag}{diag}
\DeclareMathOperator{\disc}{disc}
\DeclareMathOperator{\Contr}{Contr}
\newcommand{\Mbar}{\overline{M}}
\newcommand{\CC}{\mathbb C}
\newcommand{\QQ}{\mathbb Q}
\newcommand{\ZZ}{\mathbb Z}
\newcommand{\PP}{\mathbb P}
\newcommand{\strata}{\mathcal S}
\newtheorem{lem}{Lemma}
\newtheorem{prop}{Proposition}
\newtheorem*{conj}{Reconstruction Conjecture}
\newtheorem{thm}{Theorem}
\newtheorem{rthm}{Theorem}
\theoremstyle{definition}
\newtheorem{defi}{Definition}
\newtheorem{ex}{Example}
\theoremstyle{remark}
\newtheorem{rem}{Remark}
\begin{document}

\title{Comparing tautological relations from the equivariant
  Gromov-Witten theory of projective spaces and spin structures}
\author{Felix Janda}

\maketitle
\abstract{Pandharipande-Pixton-Zvonkine's proof of Pixton's
  generalized Faber-Zagier relations in the tautological ring of
  $\Mbar_{g, n}$ has started the study of tautological relations from
  semisimple cohomological field theories. In this article we compare
  the relations obtained in the examples of the equivariant
  Gromov-Witten theory of projective spaces and of spin structures. We
  prove an equivalence between the $\PP^1$- and $3$-spin relations,
  and more generally between restricted $\PP^m$-relations and
  similarly restricted $(m + 2)$-spin relations. We also show that the
  general $\PP^m$-relations imply the $(m + 2)$-spin relations.}

\section{Introduction}

The study of the Chow ring of the moduli space of curves was initiated
Mumford in \cite{Mu83}. Because it is difficult to understand the
whole Chow ring in general, the tautological subrings of classes
reflecting the geometry of the objects parametrized by the moduli
space were introduced. The tautological ring $R^*(\Mbar_{g, n})$ is
compactly described \cite{FaPa05} as the smallest system
\begin{equation*}
  R^*(\overline M_{g, n}) \subseteq A^*(\overline M_{g, n})
\end{equation*}
of subrings compatible with push-forward under the tautological maps,
i.e. the maps obtained from forgetting marked points or gluing curves
along common markings.

There is a canonical set of generators parametrized by decorated
graphs \cite{GrPa03}. The formal vector space $\strata_{g, n}$
generated by them, the \emph{strata algebra}, therefore admits a
surjective map to $R^*(\Mbar_{g, n})$ and the structure of the
tautological ring is determined by the kernel of this
surjection. Elements of the kernel are called \emph{tautological
  relations}.

In \cite{Pi12P} A.~Pixton proposed a set of (at the time conjectural)
relations generalizing the relations of Faber-Zagier in $R^*(M_g)$.
Furthermore, he conjectured that these give all tautological
relations. The first proof \cite{PPZ15} of the fact that the
conjectural relations are actual relations (in cohomology) brought
cohomological field theories (CohFTs) into the picture.

A CohFT on a free module $V$ of finite rank over a base ring $A$ is a
system of classes $\Omega_{g, n}$ behaving nicely under pull-back via
the tautological maps. A CohFT can also be used to give $V$ the
structure of a Frobenius algebra. The CohFT is called semisimple if,
after possible base extension, the algebra $V$ has a basis of
orthogonal idempotents.

For semisimple CohFTs there is a conjecture by Givental \cite{Gi01b}
proven in some cases by himself and in full generality in cohomology
by Teleman \cite{Te12}, giving a reconstruction of the CohFT from its
genus $0$, codimension $0$ part and the data of a power series $R(z)$
of endomorphisms of $V$. The formula naturally lifts to the strata
algebra.

To get relations from a semisimple cohomological field theory we can
use that the reconstructed CohFT of elements in the strata algebra is
in general only defined over an extension $B \leftarrow A$. However
since we have started out with a CohFT over $A$, this implies that
certain linear combinations of elements in the strata algebra have to
vanish under the projection to the tautological ring.

This procedure was essentially used in the proof \cite{PPZ15} in the
special example of the CohFT defined from Witten's $3$-spin
class. There the base ring is a polynomial ring in one variable but
the reconstructed CohFT seems to have poles in this variable.

In~\cite{rspin} (in preparation) the authors construct tautological
relations using Witten's $r$-spin class for any~$r \geq 3$. Given a
list of integers $a_1, \dots, a_n \in \{0, \dots, r-2\}$, Witten's
class $W_{g,n}(a_1, \dots, a_n)$ is a cohomology class on
$\Mbar_{g,n}$ of pure degree
\begin{equation*}
  D_{g,n}(a_1, \dots, a_n)
  =\frac{(r-2)(g-1) + \sum_{i=1}^n a_i}r.
\end{equation*}
Witten's class can be ``shifted'' by any vector in the vector space
$\langle e_0, \dots,\linebreak[1] e_{r-2} \rangle$ to obtain a
semisimple CohFT. In practice, the authors use two particular shifts
for which the answer can be explicitly computed. Shifted Witten's
class is of mixed degree: more precisely, the degrees of its
components go from 0 to $D_{g,n}(a_1, \dots, a_n)$. On the other hand,
the Givental-Teleman classificiation of semisimple CohFTs gives an
expression of the shifted Witten class in terms of tautological
classes. The authors conclude that the components of this expression
beyond degree $D_{g,n}(a_1, \dots, a_n)$ are tautological relations.

\medskip

This article studies how relations from spin structures are related to
the relations obtained from the CohFT defined from the equivariant
Gromov-Witten theory of projective spaces. The following two theorems
are our main results.

\begin{rthm}[rough version]
  The relations obtained from the equivariant Gromov-Witten theory of
  $\PP^m$ imply the $(m + 2)$-spin relations.
\end{rthm}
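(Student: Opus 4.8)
The plan is to realize both sides as instances of the Givental--Teleman reconstruction for a semisimple CohFT of rank $m+1$, and then to exhibit the $(m+2)$-spin theory as a degeneration of the equivariant Gromov--Witten theory of $\PP^m$, tracking how the relation-producing mechanism is transported. Recall the general principle sketched in the introduction: for a semisimple CohFT defined over a base ring $A$, Givental's graph sum reconstructs the classes using an $R$-matrix $R(z)$ whose entries only live in a larger ring $B$, and the requirement that the reconstructed class nevertheless descend to $A$ forces the ``defect'' part of the graph sum to vanish in $R^*(\Mbar_{g,n})$; these vanishings are the tautological relations. For $\PP^m$ the base ring carries the equivariant parameters $\lambda_0, \dots, \lambda_m$, the $R$-matrix acquires poles along the diagonals $\lambda_i = \lambda_j$, and the relations arise from the cancellation of principal parts; for $(m+2)$-spin the defect is instead the part of the graph sum lying in degree beyond $D_{g,n}$, as recalled above from~\cite{rspin}.

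First I would identify the genus-$0$ data. The $T$-equivariant quantum cohomology of $\PP^m$ is $\CC[\lambda_0, \dots, \lambda_m][H, q] / \bigl(\prod_{i=0}^m (H - \lambda_i) - q\bigr)$, so its Frobenius manifold coincides, under the Landau--Ginzburg correspondence $\prod_{i=0}^m(x - \lambda_i) = \tilde W'(x)$, with an open part of the miniversal deformation of the $A_{m+1}$ singularity $W(x) = x^{m+2}$, whose Jacobian ring is the $(m+1)$-dimensional state space $\langle e_0, \dots, e_{m}\rangle$ of the $(m+2)$-spin theory. Under this isomorphism the $m+1$ idempotents of the $\PP^m$ algebra correspond to the critical points of the deformed potential. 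The next step is to locate, inside this shared Frobenius manifold, the semisimple shift of Witten's class that produces the spin relations, and to pin down the corresponding family of equivariant configurations $(\lambda_i, q)$; the spin point sits at the confluent end where the deformation parameters degenerate toward the pure singularity.

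Second I would compare the two $R$-matrices. Both solve the same flatness equation attached to the Frobenius structure --- equivalently, both are the asymptotic fundamental solution of the quantum differential equation at a semisimple point --- and the solution is rigid once the homogeneity grading and the normalization $R(0) = \mathrm{Id}$ are imposed. It therefore suffices to show that the $\PP^m$ $R$-matrix, written in the flat frame and transported along the family found above, converges to the spin $R$-matrix. The hard part will be that this is a confluent limit: as the configuration approaches the spin point the $m+1$ semisimple points collide, so the individual entries of the $R$-matrix, which carry the poles $1/(\lambda_i - \lambda_j)$, blow up. One must take the limit not on the raw $R$-matrix but on the relation-producing combinations, where the poles in $\lambda$ are traded for the degree filtration: the principal-part cancellations of the equivariant theory become, after the substitution and in the limit, precisely the degree-truncation conditions of the spin theory. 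Controlling these cancellations --- showing that the leading asymptotics of the pole parts reproduce the excess-degree part of the spin graph sum, with no spurious surviving terms --- is the technical crux.

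Finally, I would deduce the implication at the level of relations. Each $\PP^m$ relation is an element of $\strata_{g,n}$ whose coefficients are Laurent polynomials in $\lambda_0, \dots, \lambda_m$ and $q$ mapping to zero in $R^*(\Mbar_{g,n})$; specializing these parameters, extracting coefficients of monomials, and passing to the confluent limit all send tautological relations to tautological relations and stay within the span of the $\PP^m$ relations. The comparison of $R$-matrices from the previous step identifies the outcome of these operations, applied along the family converging to the spin point, with the $(m+2)$-spin relations. Hence every spin relation lies in the span of specializations and limits of $\PP^m$ relations, which is exactly the assertion that the $\PP^m$-relations imply the $(m+2)$-spin relations.
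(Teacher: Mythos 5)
Your setup agrees with the paper up to a point: the Landau--Ginzburg matchup of the two Frobenius structures is exactly the identification made in Section~\ref{sec:cohft:two}, and the general principle that relations come from the $R$-matrix living only over $B = A[\disc^{-1}]$ is the mechanism of Section~\ref{sec:cohft:rels}. The gap is in the limit you propose. You place the spin theory ``at the confluent end where the deformation parameters degenerate toward the pure singularity'' and want to transport the $\PP^m$ data there. But the $(m+2)$-spin relations are not produced at the pure singularity: they are produced over the whole versal deformation base $k_{A_{m+1}} = \QQ[t^1, \dotsc, t^m]$, at semisimple points, from the poles of the spin $R$-matrix along the discriminant. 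At $t = 0$ the algebra $\QQ[X]/(X^{m+1})$ is nilpotent, no $R$-matrix exists, and a confluent limit of the $\PP^m$ $R$-matrix genuinely diverges. Your proposed remedy --- take the limit ``not on the raw $R$-matrix but on the relation-producing combinations,'' so that principal-part cancellations become degree-truncation conditions --- is precisely the assertion that would need proof, and no mechanism is offered. Note also that even if this could be carried out, it would target the degree-vanishing relations of \cite{rspin}, which the paper shows are only a consequence (the negative-degree part of $C$) of the discriminant-pole relations that Theorem~\ref{thm:limit} actually concerns; you would be proving a strictly weaker statement.

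The paper's proof avoids any collision of critical points. After the matchup, both theories are viewed over $\QQ[\![\lambda_0, \dotsc, \lambda_m, q]\!][\lambda^{-1}]$ at the \emph{same} generic semisimple point: the spin coordinates $t^1, \dotsc, t^m$ remain free throughout, and what differs between the two theories is only the $R$-matrix. The two ideas your proposal is missing are: (i) first apply the relation-preserving rescaling $z \mapsto z\lambda^{-1}$ (the action of the multiplicative monoid of the base ring from Section~\ref{sec:cohft:rels}), and (ii) then take the limit $\lambda^{-1} \to 0$, a limit in the equivariant parameter, not in $t$. Lemma~\ref{lem:limit} proves, by induction on the power of $z$ using the quantum differential equation in the $t^1$-direction together with the homogeneity, that the rescaled $\PP^m$-$R$-matrix contains no positive powers of $\lambda$, so the limit exists entry by entry; a companion uniqueness lemma (same differential equation, homogeneity, and integrality of entries) identifies the $\lambda^0$-part with the $A_{m+1}$-$R$-matrix. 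The spin relations then appear literally as the $\lambda^0$-coefficients of the rescaled $\PP^m$ relations, each step being relation-preserving by the formal lemmas of Section~\ref{sec:cohft:rels}. Without the rescaling of $z$ the limit you need does not exist, and in the confluent direction you chose it cannot be made to exist; this is the concrete missing step.
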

\begin{rthm}[rough version]
  A special restricted set of relations from equivariant $\PP^m$ is
  equivalent to a corresponding restricted set of $(m + 2)$-spin
  relations. For $\PP^1$ and 3-spin no restriction is necessary.
\end{rthm}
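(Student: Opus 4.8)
The plan is to run both constructions through the Givental--Teleman reconstruction in parallel. Each of the two theories is a semisimple CohFT, hence is determined by its genus-$0$ Frobenius structure together with an $R$-matrix $R(z) \in \End(V)[[z]]$, and in each case the tautological relations are produced by lifting the reconstruction to the strata algebra $\strata_{g,n}$ and isolating the components that must project to zero in $R^*(\Mbar_{g,n})$ --- for spin, the components beyond degree $D_{g,n}(a_1, \dots, a_n)$, and for $\PP^m$, the combinations forced to vanish because the genuine CohFT is defined over the smaller ring. Because the reconstruction formula is uniform in this data, it suffices to produce a dictionary under which the two Frobenius structures and the two $R$-matrices coincide; the relations then match termwise in $\strata_{g,n}$.

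First I would write both Frobenius structures in canonical coordinates. On the equivariant $\PP^m$ side the idempotents are indexed by the torus-fixed points and the canonical coordinates are the eigenvalues of quantum multiplication by the hyperplane class; on the shifted $(m+2)$-spin side the idempotents are indexed by the critical points of the superpotential $W(x) = x^{m+2}$ and the canonical coordinates are its critical values. I would then specialize the equivariant parameters $\lambda_0, \dots, \lambda_m$ to the balanced configuration dictated by mirror symmetry, namely proportional to the $(m+1)$-st roots of unity, so that the $m+1$ fixed points are carried to the $m+1$ critical points and the two Frobenius algebras become isomorphic. This specialization is exactly the \emph{restriction} of the statement: it cuts the several-parameter equivariant family down to the one-parameter locus matching the single spin shift. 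For $m = 1$ the $\PP^1$ structure is already the $A_2$, i.e.\ $3$-spin, structure once the residual scaling and translation of $\lambda_0, \lambda_1$ are used, so the specialization is vacuous and no restriction is needed.

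Next I would match the $R$-matrices. Rather than expand both by hand, I would invoke the uniqueness of $R(z)$ at a semisimple point: it is the unique solution of the flatness system determined by the Frobenius structure, subject to the symplectic condition $R(z) R^*(-z) = 1$ and to the homogeneity fixed by the Euler field. Under the dictionary of the previous step the Frobenius structures already agree, so the remaining content is to check that the two Euler vector fields are identified --- the equivariant grading by the $\lambda_i$ against the quasi-homogeneous grading of $W = x^{m+2}$. Granting this, uniqueness forces the $R$-matrices to coincide on the restricted locus, whence the reconstructed classes in $\strata_{g,n}$, and the relations read off from them, agree in both directions.

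The main obstacle is precisely this reconciliation of Euler fields, and it is what upgrades the one-sided implication of the first theorem to an equivalence. The equivariant and spin theories carry a priori different gradings, so the homogeneity conditions that pin down $R(z)$ must be matched carefully; this is where the special configuration of the $\lambda_i$ is essential. The delicate point for the \emph{reverse} implication is to show that the restriction is lossless --- that the restricted equivariant $R$-matrix is literally the spin $R$-matrix and not a degenerate specialization of it --- so that the restricted $\PP^m$ relations are recovered from the spin relations and the two restricted families span the same subspace of the kernel $\strata_{g,n} \to R^*(\Mbar_{g,n})$.
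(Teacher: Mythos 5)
Your identification of the restricted locus is correct (it is the paper's ``Airy limit'' $t^2 = \dotsb = t^m = 0$, i.e.\ torus weights proportional to the $(m+1)$-st roots of unity, vacuous for $m = 1$), but the core step of your argument --- that uniqueness of the homogeneous $R$-matrix forces the $\PP^m$- and $(m+2)$-spin $R$-matrices to \emph{coincide} on this locus --- is false, and you have in effect assumed away exactly the point you yourself flag as ``the main obstacle.'' The Euler fields cannot be reconciled: the $A_{m+1}$-theory is conformal while equivariant $\PP^m$ is only quasi-conformal, and under the matchup $t^1 = -q - \lambda$ the two matrices satisfy genuinely different differential equations, namely $[\tilde R_{\PP^m}, \xi] - zqL_E \tilde R_{\PP^m} + zq\tilde R_{\PP^m}\mu = 0$ versus $[\tilde R_{A_{m+1}}, \xi] + zL_E \tilde R_{A_{m+1}} - z\tilde R_{A_{m+1}}\mu = 0$ (note the factor of $q$), with different homogeneities. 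The matrices are not equal: as the proof of Theorem~\ref{thm:limit} shows, the $\PP^m$-matrix only recovers the spin matrix after the rescaling $z \mapsto z\lambda^{-1}$ and passage to the limit $\lambda^{-1} \to 0$; away from that limit it carries nontrivial lower-order terms in $\lambda^{-1}$. If the two $R$-matrices literally coincided on the Airy locus, Theorem~\ref{thm:limit} would be immediate there and all of Section~\ref{sec:equiv} unnecessary.

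What the paper's proof does instead is compare the \emph{relations} without ever equating the $R$-matrices: it produces an invertible series $\varphi \in \lambda\QQ[\![t^1\lambda^{-1}]\!]$ and a correction matrix $R(z)$ \emph{without poles in the discriminant} such that $\tilde R_{\PP^m}(z) = R(z)\,\tilde R_{A_{m+1}}(z\varphi)$, and then invokes the two relation-preserving, invertible operations of Section~\ref{sec:cohft:rels} (substitution $z \mapsto \varphi z$ for invertible $\varphi$, and the action of an $R$-matrix defined over the base ring) to conclude equivalence. The real work, absent from your proposal, is: (i) Lemma~\ref{lem:notpol}, showing $\tilde R_{A_{m+1}} \xi \tilde R_{A_{m+1}}^{-1}$ is not polynomial in $z$, which forces $\varphi$ to satisfy $q + \varphi - qL_E\varphi/\varphi = 0$ and pins it down uniquely; and (ii) Lemma~\ref{lem:fund}, a pole-exclusion argument showing that every Laurent-series solution of the resulting equation \eqref{eq:de:comp} is in fact polynomial in $t$, so the correction matrix has no discriminant poles. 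Your step ``uniqueness forces the $R$-matrices to coincide'' is precisely where the argument breaks, and no choice of dictionary can repair it, since the discrepancy between the two matrices is the entire content of the theorem.
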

Since for equivariant $\PP^m$ the reconstruction holds in Chow,
Theorem~\ref{thm:limit} implies that the higher spin relations also
hold in Chow.

We will give strong evidence that the method of proof for
Theorem~\ref{thm:equiv} cannot directly be extended to an equivalence
between the full $\PP^m$- and $(m + 2)$-spin relations for $m >
2$. Possibly, there are more $\PP^m$- than $(m + 2)$-spin relations.

Any of the theorems gives another proof of the fact that Pixton's
relations hold in Chow. In fact, the proof of Theorem~\ref{thm:limit} in
the case $m = 1$ is essentially a simplified version of the author's
previous proof in \cite{Ja13P}.

This article does not give a comparison between relations from CohFTs
of different dimensions, nor does it consider all relations from
equivariant $\mathbb P^m$. On the other hand, if indeed Pixton's
relations are all tautological relations, the $3$-spin relations have
to imply the relations from any other semisimple CohFT. Yet, for
example it not clear how the $4$-spin relations can be written in
terms of $3$-spin relations.

\medskip

The article is structured as follows. In Section~\ref{sec:cohft} we
give definitions of CohFTs, discuss the $R$-matrix action on CohFTs
and the reconstruction result. We then in Section~\ref{sec:cohft:two}
turn to the two examples of equivariant $\PP^m$ and the CohFT from the
$A_{m + 1}$-singularity. In Section~\ref{sec:cohft:rels} we describe
the general procedure of obtaining relations from semisimple CohFTs
and general methods of proving that the relations from one CohFT imply
the relations from another. We then state precise versions of
Theorem~\ref{thm:limit} and \ref{thm:equiv}. Section~\ref{sec:osc}
discusses explicit expression of the $R$-matrices in both theories in
terms of asymptotics of oscillating integrals. The constraints
following from these expressions will be used in the next sections. We
also note a connection to Airy functions. Section~\ref{sec:limit} and
Section~\ref{sec:equiv} give proofs of Theorem~\ref{thm:limit} and
\ref{thm:equiv}. Finally, Section~\ref{sec:noequiv} gives evidence
why, with the methods used in the proofs of the theorems, an
equivalence between $\PP^m$- and $(m + 2)$-spin relations cannot be
established. Since the reconstruction result of Givental we use to get
relations in Chow has never appeared explicitly in the literature, we
recall its proof in Appendix~\ref{sec:givenloc}.

\subsection*{Acknowledgments}

This work started out of discussions at the conference
\emph{Cohomology of the moduli space of curves} organized by the
\emph{Forschungsinstitut für Mathematik} at ETH Zürich. The author is
especially grateful for encouragement and explanations from
Y.P. Lee. The author thanks the \emph{Mathematics Department of the
  University of Utah} for the hospitality during his visit in January
2014. The author is grateful for comments of A. Pixton and D. Zvonkine
on earlier versions of the paper, a copy of a draft of \cite{rspin}
from R. Pandharipande and for helpful discussions with S. Keel.

The author was supported by the Swiss National Science Foundation
grant SNF 200021\_143274.

\section{Cohomological field theories} \label{sec:cohft}

\subsection{Definitions}

Cohomological field theories were first introduced by Kontsevich and
Manin in \cite{KoMa97} to formalize the structure of classes from
GW-theory. Let $A$ be an integral, commutative $\QQ$-algebra, $V$ a
free $A$-module of finite rank and $\eta$ a non-singular bilinear form
on $V$.
\begin{defi}
  A cohomological field theory (CohFT) $\Omega$ on $(V, \eta)$ is a
  system
  \begin{equation*}
    \Omega_{g, n} \in A^*(\Mbar_{g, n}) \otimes_{\mathbb Q} (V^*)^{\otimes n}
  \end{equation*}
  of multilinear forms with values in the Chow ring of $\Mbar_{g, n}$
  satisfying the following properties:
  \begin{description}
  \item[Symmetry] $\Omega_{g, n}$ is symmetric in its $n$ arguments
  \item[Gluing] The pull-back of $\Omega_{g, n}$ via the gluing map
    \begin{equation*}
      \Mbar_{g_1, n_1 + 1} \times \Mbar_{g_2, n_2 + 1} \to \Mbar_{g, n}
    \end{equation*}
    is given by the direct product of $\Omega_{g_1, n_2 + 1}$ and
    $\Omega_{g_2, n_2 + 1}$ with the bivector $\eta^{-1}$ inserted at
    the two gluing points. Similarly for the gluing map $\Mbar_{g - 1,
      n + 2} \to \Mbar_{g, n}$ the pull-back of $\Omega_{g, n}$ is given
    by $\Omega_{g - 1, n + 2}$ with $\eta^{-1}$ inserted at the two
    gluing points.
  \item[Unit] There is a special element $\mathbf 1 \in V$ called the
    \emph{unit} such that
    \begin{equation*}
      \Omega_{g, n + 1}(v_1, \dotsc, v_n, \mathbf 1)
    \end{equation*}
    is the pull-back of $\Omega_{g, n}(v_1, \dotsc, v_n)$ under the
    forgetful map and
    \begin{equation*}
      \Omega_{0, 3}(v, w, \mathbf 1) = \eta(v, w).
    \end{equation*}
  \end{description}
\end{defi}
\begin{defi}
  The quantum product $(u, v) \mapsto uv$ on $V$ with unit $\mathbf 1$
  is defined by the condition
  \begin{equation} \label{eq:defquantumproduct}
    \eta(uv, w) = \Omega_{0, 3} (u, v, w).
  \end{equation}
\end{defi}
\begin{defi}
  A CohFT is called semisimple if there is a base extension $A \to B$
  such that the algebra $V \otimes_A B$ is semisimple.
\end{defi}

\subsection{First Examples}

\begin{ex}
  \label{ex:trivial}
  For each Frobenius algebra there is the \emph{trivial CohFT} (also
  called \emph{topological field theory} or TQFT) $\Omega_{g, n}$
  characterized by \eqref{eq:defquantumproduct} and that
  \begin{equation*}
    \Omega_{g, n} \in A^0 (\Mbar_{g, n}) \otimes (V^*)^{\otimes n}.
  \end{equation*}
  Let us record an explicit formula for Appendix~\ref{sec:givenloc}:
  In the case that the Frobenius algebra is semisimple, there is a
  basis $\epsilon_i$ of orthogonal idempotents of $V$ and
  \begin{equation*}
    \tilde\epsilon_i = \frac{\epsilon_i}{\sqrt{\Delta_i}},
  \end{equation*}
  where $\Delta_i^{-1} = \eta(\epsilon_i, \epsilon_i)$, is a
  corresponding orthonormal basis of \emph{normalized idempotents}. We
  have
  \begin{equation*}
    \Omega_{g, n}(\tilde\epsilon_{i_1}, \dotsc, \tilde\epsilon_{i_n}) =
    \begin{cases}
      \sum_j \Delta_{i_j}^{g - 1}, & \text{if } n = 0, \\
      \Delta_{i_1}^{\frac{2g - 2 + n}2}, & \text{if } i_1 = \dotsb = i_n, \\
      0, & \text{else.}
    \end{cases}
  \end{equation*}
\end{ex}

\begin{ex}
  \label{ex:hodge}
  The Chern polynomial $c_t(\mathbb E)$ of the Hodge bundle $\mathbb
  E$ gives a $1$-dimensional CohFT over $\QQ[t]$.
\end{ex}

\begin{ex}
  Let $X$ be a smooth, projective variety such that the cycle class
  map gives an isomorphism between Chow and cohomology rings. Let $A =
  \QQ[\![q^\beta]\!]$ be its Novikov ring. Then the Gromov-Witten
  theory of $X$ defines a CohFT based on the $A$-module $A^*(X)
  \otimes A$ by the definition
  \begin{equation*}
    \Omega_{g, n} (v_1, \dotsc, v_n)
    = \sum_{\beta} \pi_*\left(\prod_{i = 1}^n \ev_i^*(v_i) \cap [\Mbar_{g, n}(X, \beta)]^{vir}\right) q^\beta,
  \end{equation*}
  where the sum ranges over effective, integral curve classes, $\ev_i$
  is the $i$-th evaluation map and $\pi$ is the forgetful map $\pi:
  \Mbar_{g, n}(X, \beta) \to \Mbar_{g, n}$. The gluing property
  follows from the splitting axiom of virtual fundamental classes. The
  fundamental class of $X$ is the unit of the CohFT and the unit
  axioms follow from the identity axiom in GW-theory.

  For a torus action on $X$, this example can be enhanced to give a
  CohFT from the equivariant GW-theory of $X$.
\end{ex}

\subsection{The \texorpdfstring{$R$}{R}-matrix
  action} \label{sec:Rmatrixaction}

\begin{defi} \label{def:slg}
  The (upper part of the) symplectic loop group is defined as the
  subgroup of the group of endomorphism valued power series $R = 1 +
  O(z)$ in $z$ satisfying the symplectic condition
  \begin{equation*}
    \eta(R(z)v, R(-z)w) = \eta(v, w)
  \end{equation*}
  for all vectors $v$ and $w$.
\end{defi}

An action of this group on the space of CohFTs makes it interesting
for us. In its definition the endomorphism valued power series $R$ is
evaluated at cotangent line classes and applied to vectors.

Given a CohFT $\Omega_{g, n}$ the new CohFT $R\Omega_{g, n}$ takes the
form of a sum over dual graphs $\Gamma$
\begin{equation*}
  R\Omega_{g, n}(v_1, \dotsc, v_n)
  = \sum_{\Gamma} \frac 1{\Aut(\Gamma)} \xi_*\left(\prod_v \sum_{k = 0}^\infty \frac 1{k!} \varepsilon_* \Omega_{g_v, n_v + k}(\dots)\right),
\end{equation*}
where $\xi: \prod_v \Mbar_{g_v, n_v} \to \Mbar_{g, n}$ is the gluing
map of curves of topological type $\Gamma$ from their irreducible
components, $\varepsilon: \Mbar_{g_v, n_v + k} \to \Mbar_{g_v, n_v}$
forgets the last $k$ markings and we still need to specify what is put
into the arguments of $\prod_v \Omega_{g_v, n_v + k_v}$.
\begin{itemize}
\item Into each argument corresponding to a marking of the curve, put
  $R^{-1}(\psi)$ applied to the corresponding vector.
\item Into each pair of arguments corresponding to an edge put the
  bivector
  \begin{equation*}
    \frac{R^{-1}(\psi_1) \eta^{-1} R^{-1}(\psi_2)^t - \eta^{-1}}{-\psi_1 - \psi_2} \in \Hom(V^*, V)[\![\psi_1, \psi_2]\!] \cong V^{\otimes 2}[\![\psi_1, \psi_2]\!],
  \end{equation*}
  where one has to substitute the $\psi$-classes at each side of the
  normalization of the node for $\psi_1$ and $\psi_2$. By the
  symplectic condition this is well-defined.
\item Into each of the additional arguments for each vertex put
  \begin{equation*}
    T(\psi) := \psi(1 - R^{-1}(\psi)) \mathbf 1,
  \end{equation*}
  where $\psi$ is the cotangent line class corresponding to that
  vertex. Since $T(z) = O(z^2)$ the above $k$-sum is finite.
\end{itemize}

\begin{conj}[Givental]
  The $R$-matrix action is free and transitive on the space of
  semisimple CohFTs based on a given Frobenius algebra.
\end{conj}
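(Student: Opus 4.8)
The plan is to establish both halves of the statement---freeness and transitivity---by reducing every semisimple CohFT to the \emph{trivial} (topological) field theory $\omega$ attached to the same Frobenius algebra, whose explicit form is recorded in Example~\ref{ex:trivial}. Transitivity is equivalent to the assertion that every semisimple CohFT $\Omega$ lies in the $R$-matrix orbit of $\omega$; once transitivity is known, all stabilizers are conjugate, so freeness reduces to showing that the stabilizer of $\omega$ alone is trivial.

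First I would treat freeness. Writing $R(z) = 1 + R_1 z + R_2 z^2 + \cdots$ and assuming $R\omega = \omega$, I would compare homogeneous degrees. Since $\omega$ is concentrated in cohomological degree $0$, while the degree-$k$ part of $R\omega$ is assembled from $R_1, \dots, R_k$ paired against $\psi$- and $\kappa$-classes supported on boundary strata, the vanishing of the positive-degree part of $R\omega$ yields a sequence of linear conditions. The invertibility of the normalization constants $\Delta_i$ of Example~\ref{ex:trivial}, a consequence of semisimplicity, makes these conditions non-degenerate, forcing $R_1 = 0$ and then, by induction on $k$, $R_k = 0$ for all $k$.

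For transitivity I would build an $R$-matrix recursively so that $R^{-1}\Omega$ agrees with $\omega$ to ever higher order. The base case is genus $0$: following Givental \cite{Gi01b}, the genus-$0$ topological recursion relations express the ancestor potential through the fundamental solution $S(z)$ of the quantum differential equation, and the Birkhoff factorization of $S(z)$ at the chosen semisimple point produces a canonical $R$-matrix matching the genus-$0$ part of $\Omega$ with that of $\omega$. After this correction the discrepancy $R^{-1}\Omega - \omega$ still satisfies the gluing and unit axioms but now vanishes in genus $0$, and the task becomes to show that any such system is identically zero.

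The main obstacle is exactly this final higher-genus step. To propagate the genus-$0$ vanishing to all genera through the gluing axiom one must control $H^*(\Mbar_{g,n})$ in a sufficiently large range, and the genuinely deep ingredient---following Teleman \cite{Te12}---is the computation of the stable cohomology of the moduli space of curves, i.e. Mumford's conjecture as established by the Madsen-Weiss theorem, which identifies it as a free polynomial algebra on the $\kappa$-classes. This is what guarantees that the finitely many parameters carried by $R(z)$ exhaust all of the CohFT; the recursive bookkeeping surrounding it is formal. I would finally remark that when the reconstruction is available by more geometric means, as for the equivariant Gromov-Witten theory discussed in Appendix~\ref{sec:givenloc}, this cohomological input can be replaced by a localization argument, which has the advantage of yielding the reconstruction directly in the Chow ring.
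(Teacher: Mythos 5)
You are attempting to prove something that the paper itself does not prove and cannot prove: the statement is recorded there as a \emph{conjecture} (attributed to Givental), and the paper only states two partial results towards it --- Givental's theorem that reconstruction holds \emph{in Chow} for the equivariant GW theory of toric targets (whose localization proof is recalled in Appendix~\ref{sec:givenloc}) and Teleman's theorem \cite{Te12} that reconstruction holds \emph{in cohomology}. Your outline is exactly the Givental--Teleman program: Birkhoff factorization of the fundamental solution $S(z)$ to handle genus $0$, then propagation to higher genus using stability and the Madsen--Weiss theorem (Mumford's conjecture). Carried out, this gives Teleman's theorem, i.e.\ the statement for CohFTs valued in $H^*(\Mbar_{g,n})$. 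But the conjecture as formulated in this paper concerns CohFTs valued in the Chow ring $A^*(\Mbar_{g,n})$ (see the paper's definition of a CohFT), and the deep input you invoke --- the identification of the stable cohomology of moduli spaces of curves as a polynomial algebra on $\kappa$-classes --- is a purely topological statement with no known Chow-theoretic analogue. Nothing in your argument substitutes for it on the Chow side; this is precisely why the general statement remains a conjecture and why the paper must fall back on the localization argument of Appendix~\ref{sec:givenloc} whenever Chow-valued reconstruction is needed (as in Theorem~\ref{thm:limit}).

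A second, smaller gap is in your freeness argument. You claim the linear conditions forcing $R_k = 0$ are non-degenerate because the constants $\Delta_i$ are invertible, but non-degeneracy is not a question about coefficients alone: the degree-$k$ part of $R\omega$ is a linear combination of $\psi$-, $\kappa$- and boundary strata classes, and such combinations can vanish identically because of tautological relations --- the very objects this paper studies. To conclude $R_k = 0$ you need linear independence of the relevant classes for suitable $(g,n)$, which is again a stable-range cohomological fact rather than a consequence of semisimplicity, and which again says nothing about the Chow-valued setting. So both halves of your proposal ultimately rest on cohomological input that establishes Teleman's theorem but not the conjecture stated here.
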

\setcounter{thm}{2}
\begin{thm}[Givental\cite{Gi01b}] \label{thm:reconstr:toric}
  Reconstruction for the equivariant GW-theory of toric targets holds
  in Chow.
\end{thm}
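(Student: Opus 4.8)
The plan is to prove the reconstruction by means of the virtual localization formula of Graber--Pandharipande, which holds in the equivariant Chow ring; this is precisely what upgrades the statement from cohomology to Chow. Fix a toric target $X$ together with a torus action having isolated fixed points $p_1, \dotsc, p_N$. After inverting the equivariant parameters, the equivariant cohomology of $X$ acquires a basis of orthogonal idempotents $\epsilon_i$ supported at the fixed points, so that the Frobenius algebra $V$ becomes semisimple with $\Delta_i^{-1} = \eta(\epsilon_i, \epsilon_i)$ given by the inverse equivariant Euler class of $T_{p_i} X$. This identifies the genus $0$, codimension $0$ part of the GW-CohFT with the semisimple Frobenius algebra whose trivial CohFT is recorded in Example~\ref{ex:trivial}.

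First I would apply virtual localization to the moduli spaces $\Mbar_{g, n}(X, \beta)$. The torus-fixed loci are indexed by decorated graphs $\Gamma$ whose vertices are labelled by a fixed point and a genus, whose edges record the invariant lines together with a covering degree, and whose legs carry the markings. Each fixed locus is a product of moduli spaces $\Mbar_{g_v, n_v}$ (up to finite quotients), and the localization contribution factors into a product of \emph{vertex} contributions, \emph{edge} contributions and \emph{flag} contributions, with the inverse equivariant Euler class of the virtual normal bundle distributed among them.

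The heart of the argument is to reorganize this graph sum into the shape of the $R$-matrix action of Section~\ref{sec:Rmatrixaction} applied to the trivial CohFT. Expanding the vertex Euler-class factors in the cotangent-line classes $\psi$ at the special points, the integral over $\Mbar_{g_v, n_v}$ of the leading factor reproduces exactly the topological value $\Delta_i^{(2g_v - 2 + n_v)/2}$, while the subleading terms are accounted for by inserting $\psi$-dependent endomorphisms at the legs and nodes. One then checks that the flag contributions assemble, leg by leg, into a single power series $R^{-1}(\psi)$ of endomorphisms of $V$; that the contribution of each edge matches the symplectic bivector $(R^{-1}(\psi_1)\eta^{-1} R^{-1}(\psi_2)^t - \eta^{-1})/(-\psi_1 - \psi_2)$; and that the vertices carrying extra markings produced by the multiple-cover and unstable contributions yield precisely the translation term $T(\psi) = \psi(1 - R^{-1}(\psi))\mathbf 1$. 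The symplectic condition of Definition~\ref{def:slg} for the resulting $R$ follows from the compatibility of the edge contributions at the two sides of each node, i.e. from the fact that the propagator built from $R$ is the one coming from smoothing the node.

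The step I expect to be the main obstacle is the clean identification of the $R$-matrix and the verification that it is symplectic: one must show that the flag factors depend only on the local geometry at a single fixed point and a single cotangent direction, so that they glue into a well-defined element $R(z) = 1 + O(z)$ independent of the global graph, and that the edge and translation terms are then forced by this choice. Once the localization sum is matched term by term with the reconstruction formula, the theorem follows, since every manipulation --- virtual localization, the Euler-class expansion in $\psi$, and the graph reorganization --- takes place in the equivariant Chow ring, so the reconstructed expression for the GW-CohFT is an identity in Chow.
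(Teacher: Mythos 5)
Your overall strategy --- virtual localization of Graber--Pandharipande in equivariant Chow, followed by a reorganization of the graph sum into the $R$-matrix action --- is the same as the paper's (Appendix~\ref{sec:givenloc}). But the mechanism you propose for producing the $R$-matrix is wrong, and you yourself flag the false step as the ``main obstacle'': the $R$-matrix is \emph{not} assembled from flag factors depending only on the local geometry at a fixed point. In the localization formula \eqref{eq:normal} the flag and edge factors are elementary rational functions of the weights; the series $R^{-1}$, the node bivector and the translation $T$ arise instead as generating series over \emph{entire trees of rational components} contracted by the stabilization map --- trees carrying a marking (type 1), trees joining two stable components (type 2), and trees attached to a single stable component with no markings (type 3) --- summed over all degrees $d$ and all numbers of insertions of the formal point $p$. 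These are inherently non-local $q$-series, equal to asymptotic expansions of the fundamental solution $S$ of the quantum differential equation. Putting them into the canonical form of Section~\ref{sec:Rmatrixaction} requires the string and dilaton equations (to arrange $T(\psi) = O(\psi^2)$ and extract the $\Delta^{1/2}$ factors, identities \eqref{eq:sflow2} and \eqref{eq:sflowT}), and the identification of the node series with $\bigl(R^{-1}(z)\eta^{-1}R^{-1}(w)^t - \eta^{-1}\bigr)/(-z-w)$ is not a ``compatibility of edge contributions at the node'': it is the WDVV/identity-axiom identity \eqref{eq:SetaS}, and the symplectic condition for $R$ then follows by letting $z + w \to 0$ there.

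There is a second, independent gap: the stable vertex contribution is not the TQFT value. You claim the leading vertex factor integrates to $\Delta_i^{(2g_v - 2 + n_v)/2}$, but the vertex in \eqref{eq:normal} carries the Hodge class $e(\mathbb E^* \otimes T_{\PP^m, \zeta(v)})/e_{\zeta(v)} = e_i^{g-1}\prod_{j \neq i} c_{\lambda_j - \lambda_i}(\mathbb E)$, a full CohFT of Hodge classes; moreover the classical Euler class $e_i$ is not $\Delta_i$, which is the inverse norm of a \emph{quantum} idempotent, a $q$- and $p$-dependent quantity identified only at the end of the argument by comparing underlying TQFTs. Converting the Hodge vertex theory into the trivial TQFT requires Mumford's Grothendieck--Riemann--Roch calculation (Example~\ref{ex:mumford}), which contributes a further diagonal $R$-matrix with Bernoulli-number entries; the $R$-matrix of the theorem is the composition of this diagonal matrix with the tree-summation series, and this is also what produces the prescribed classical limit \eqref{eq:limit} at $q \to 0$. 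Without the tree summation, the string/dilaton normalization, the WDVV step, and the Hodge-to-TQFT conversion, the localization sum cannot be matched with the reconstruction formula, so the proposal as written does not close.
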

\begin{thm}[Teleman\cite{Te12}]
  Reconstruction holds in cohomology.
\end{thm}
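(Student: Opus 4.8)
The plan is to establish that the symplectic loop group of Definition~\ref{def:slg} acts freely and transitively on the set of semisimple CohFTs sharing a fixed underlying Frobenius algebra; this is exactly the cohomological upgrade of Givental's conjecture stated just above. I would split the statement into \emph{freeness} and \emph{transitivity}, and within transitivity separate a genus-$0$ part, which is classical, from the higher-genus part, which carries the genuinely new input.

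First I would dispose of freeness. Suppose $R = 1 + O(z)$ fixes a semisimple CohFT $\Omega$. The explicit formula for $R\Omega_{g, n}$ recalled in Section~\ref{sec:Rmatrixaction} expresses the correction through the insertions $R^{-1}(\psi)$ on the legs, the edge bivector, and the translation $T(\psi)$. Writing $R = 1 + R_1 z + R_2 z^2 + \dotsb$ and comparing $R\Omega = \Omega$ order by order in $z$ — testing against the lowest genus and fewest markings in which a given $R_k$ appears undetermined by the symplectic condition — forces $R_k = 0$ inductively, hence $R = 1$. This is a formal computation with no geometric obstacle.

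Next I would treat transitivity in genus $0$. A semisimple genus-$0$ CohFT is the same data as a semisimple Frobenius manifold, and Givental's genus-$0$ theory (equivalently Dubrovin's analysis of the associated flat connection) produces a canonical element $R$ of the symplectic loop group: it is the formal asymptotic fundamental solution of the flatness equations at the semisimple point, normalized by the symplectic condition. Acting by the $R$ interpolating between the two calibrations, I may assume that the two given CohFTs $\Omega$ and $\Omega'$ agree in genus $0$; by the freeness step this $R$ is unique. One then reduces to showing that a semisimple CohFT whose genus-$0$ part is the trivial theory on the normalized idempotents $\tilde\epsilon_i$ of Example~\ref{ex:trivial} carries no further moduli at all.

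The hard part — and the reason the theorem is asserted in cohomology rather than in Chow — is propagating agreement from genus $0$ to all genera. Here I would diagonalize using the orthonormal idempotents, splitting the theory into rank-one pieces glued along the Deligne--Mumford boundary, each piece being a family version of the Hodge-class theory of Example~\ref{ex:hodge}. The residual freedom then lives in the stable cohomology of $\Mbar_{g, n}$, and the decisive input is the Mumford conjecture in the form proved by Madsen--Weiss: in the stable range this cohomology is a free polynomial algebra on the $\kappa$-classes. The key computation I would carry out is that the $\kappa$-class corrections one can add are exactly matched by the higher-order coefficients of the $R$-matrix already fixed in genus $0$, while the forgetful and gluing axioms tie the low-genus exceptional range back to the stable range. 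The main obstacle is precisely this last point: controlling the higher-genus theory requires a homotopy-theoretic statement with no algebraic-cycle analogue, which is why reconstruction holds here only in cohomology, and why the separate toric argument of Theorem~\ref{thm:reconstr:toric} is needed to obtain the Chow version used in this paper.
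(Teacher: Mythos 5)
You should first be clear about what the paper actually does with this statement: it does not prove it. This is Teleman's classification theorem, imported wholesale from \cite{Te12}; the only reconstruction argument the paper itself supplies is Appendix~\ref{sec:givenloc}, which proves the \emph{different} statement (Theorem~\ref{thm:reconstr:toric}, reconstruction in Chow for equivariant toric targets) by an explicit virtual localization computation — a method that has no analogue for an abstract semisimple CohFT. So there is no in-paper proof to compare against; the only meaningful comparison is with Teleman's own argument.

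Measured against that, your sketch does identify the correct architecture — reduction to genus $0$ via Givental/Dubrovin calibration theory, splitting by orthogonal idempotents, and, decisively, the Mumford conjecture in its Madsen--Weiss form, which is indeed the input that makes the theorem a cohomological and not an algebro-geometric one. But the sketch stops exactly where the theorem begins, and two gaps are genuine. First, the assertion that ``the residual freedom lives in the stable cohomology of $\Mbar_{g,n}$'' is not available for free: you need Harer stability (which you never invoke) to identify $H^*$ in a fixed degree with the stable range for large genus, and you then need an argument for why the gluing and forgetful axioms propagate a classification valid only in the stable range down to all $(g,n)$, including the unstable low-genus cases and the translation part $T(\psi)$; this descent is the bulk of \cite{Te12} and is asserted here (``the key computation I would carry out'') rather than performed. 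Second, the ``splitting into rank-one pieces glued along the boundary'' is itself part of what must be proved: semisimplicity diagonalizes the Frobenius \emph{algebra}, but there is no formal reason the higher-genus \emph{classes} respect the idempotent decomposition away from degree zero — establishing that they do, up to exactly the corrections generated by the $R$-matrix action, is the content of the theorem, not a reduction one may perform at the outset. Your freeness step, while plausible, is also beside the point: ``reconstruction'' as used in this paper is the existence (transitivity) half of Givental's conjecture, and uniqueness of the homogeneous $R$-matrix is handled separately (Proposition~\ref{prop:fund}).
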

\begin{rem}
  Givental's original conjecture was only stated in terms of the
  descendent integrals of the CohFT and there is no explicit proof of
  Theorem~\ref{thm:reconstr:toric} in the literature. Therefore in
  Appendix~\ref{sec:givenloc} we recall the well-known lift of
  Givental's proof to CohFTs.
\end{rem}

\begin{ex}
  \label{ex:mumford}
  By Mumford's Grothendieck-Riemann-Roch calculation \cite{Mu83} the
  single entry of the $R$-matrix taking the trivial one-dimensional
  CohFT to the CohFT from Example~\ref{ex:hodge} is given by
  \begin{equation*}
    \exp\left(\sum_{i = 1}^\infty \frac{B_{2i}}{2i(2i - 1)} (tz)^{2i - 1}\right),
  \end{equation*}
  where $B_{2i}$ are the Bernoulli numbers, defined by
  \begin{equation*}
    \sum_{i = 0}^\infty B_i \frac{x^i}{i!} = \frac x{e^x - 1}.
  \end{equation*}
  More generally, if we consider a more general CohFT given by a
  product of Chern polynomials (in different variables) of the Hodge
  bundle, the $R$-matrix from the trivial CohFT is the product of the
  $R$-matrices of the factors.
\end{ex}

\subsection{Frobenius manifolds and the quantum differential
  equation} \label{sec:cohft:frobenius}

There is a natural way to deform a CohFT $\Omega_{g, n}$ on $V$ over
$A$ to a CohFT over $A[\![V]\!]$. For a basis $\{e_\mu\}$ of $V$ let
\begin{equation*}
  p = \sum t^\mu e_\mu
\end{equation*}
be a formal point on $V$. Then the deformed CohFT is given by
\begin{equation*}
  \Omega_{g, n}^p (v_1, \dotsc, v_n)
  = \sum_{k = 0}^\infty \frac 1{k!} \pi_*\Omega_{g, n + k}(v_1, \dotsc, v_n, p, \dotsc, p).
\end{equation*}
Notice that the deformation is constant in the direction of the unit.

The quantum product on the deformed CohFT gives $V$ the structure of a
(formal) \emph{Frobenius manifold} \cite{Du94}. The $e_\mu$ induce
flat vector fields on $V$ corresponding to the \emph{flat coordinates}
$t^\mu$. Greek indices will stand for flat coordinates with an
exception stated in Section~\ref{sec:cohft:two}.

A Frobenius manifold is called \emph{conformal} if it admits an
\emph{Euler vector field}, i.e. a vector field $E$ of the form
\begin{equation*}
  E = \sum_\mu (\alpha_\mu t^\mu + \beta_\mu) \frac{\partial}{\partial t^\mu},
\end{equation*}
such that the quantum product, the unit and the metric are
eigenfunctions of the Lie derivative $L_E$ with eigenvalues $1$, $-1$
and $2 - \delta$ respectively. Here $\delta$ is a rational number
called conformal dimension. Assuming that $A$ itself is the ring of
(formal) functions of a variety $X$ we say that the Frobenius manifold
is \emph{quasi-conformal} if there is vector field $E$ on $X \times V$
satisfying the axioms of an Euler vector field.

A CohFT $\Omega_{g, n}$ is called \emph{homogeneous}
(\emph{quasi-homogeneous}) if its Frobenius manifold is conformal
(quasi-conformal) and the extended CohFT is an eigenvector of of $L_E$
of eigenvalue $(g - 1)\delta + n$. As the name suggests CohFTs are
homogeneous if they carry a grading such that all natural structures
are homogeneous with respect to the grading.

We say that the Frobenius manifold $V$ is \emph{semisimple} if there
is a basis of idempotent vector fields $\epsilon_i$ defined after
possible base extension of $A$. The idempotents can be formally
integrated to \emph{canonical coordinates} $u_i$. We will use roman
indices for them. Let $\mathbf u$ be the diagonal matrix with entries
$u_i$ and $\Psi$ be the transition matrix from the basis of normalized
idempotents corresponding to the $u_i$ to the flat basis $e_i$.

The $R$-matrix from the trivial theory to $\Omega^p$ satisfies a
differential equation which is related to the \emph{quantum
  differential equation}
\begin{equation*}
  z \frac{\partial}{\partial t^\alpha} S_j = e_\alpha \star S_j
\end{equation*}
for vectors $S_j$. We assemble the $S_j$ into a matrix $S$.
\begin{prop}[see \cite{Gi01a}]
  \label{prop:fund}
  If $V$ is semisimple and after a choice of canonical coordinates
  $u_i$ has been made, there exists a fundamental solution $S$ to the
  quantum differential equation of the form
  \begin{equation} \label{eq:SR}
    S = \Psi R e^{\mathbf u/z},
  \end{equation}
  such that $R$ satisfies the symplectic condition $R(z)R^t(-z) =
  1$. The matrix $R$ is unique up to right multiplication by a
  diagonal matrix of the form
  \begin{equation*}
    \exp(a_1 z + a_3 z^3 + a_5 z^5 + \dotsb)
  \end{equation*}
  for constant diagonal matrices $a_i$.

  In the case that there exists an Euler vector field $E$, there is a
  unique matrix $R$ defined from a fundamental solution $S$ by
  \eqref{eq:SR} which satisfies the homogeneity
  \begin{equation*}
    z \frac{\mathrm d}{\mathrm dz} R + L_E R = 0.
  \end{equation*}
  Such an $R$ automatically satisfies the symplectic condition.
\end{prop}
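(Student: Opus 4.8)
The plan is to construct $R$ by solving the quantum differential equation order by order in $z$ after passing to the normalized idempotent frame, where the leading behaviour of $S$ is governed by $e^{\mathbf u/z}$. First I would rewrite $z\,\partial_{t^\alpha}S = e_\alpha \star S$ in canonical coordinates, where multiplication by the idempotent vector field $\partial_{u_i}$ is diagonalized by $\Psi$ to the matrix unit $E_{ii}$ in the normalized idempotent basis. Substituting the ansatz $S = \Psi R\, e^{\mathbf u/z}$ and using $z\,\partial_{u_i} e^{\mathbf u/z} = E_{ii}\,e^{\mathbf u/z}$, the eikonal term cancels and the equation for $R$ becomes
\begin{equation*}
  z\,\partial_{u_i} R = [E_{ii}, R] - z\,(\Psi^{-1}\partial_{u_i}\Psi)\,R,
\end{equation*}
up to the precise form of the connection term $\Psi^{-1}\partial_{u_i}\Psi$; crucially this no longer contains the singular factor $e^{\mathbf u/z}$.

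Writing $R = 1 + R_1 z + R_2 z^2 + \cdots$ and matching powers of $z$ yields a recursion in which $R_{k+1}$ is determined from $R_k$ through an equation of the form $[\mathbf u, R_{k+1}] = \partial_{u_i}R_k + (\Psi^{-1}\partial_{u_i}\Psi)R_k$. Semisimplicity enters here: the canonical coordinates $u_i$ are distinct, so the operator $[\mathbf u, -]$ is invertible on off-diagonal matrices and zero on diagonal ones. Hence at each step the off-diagonal part of $R_{k+1}$ is uniquely determined, while its diagonal part is only constrained by a first-order differential equation in the $u_i$, solvable up to a constant diagonal matrix. These constants, one at each order, produce the entire ambiguity in $R$.

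Next I would impose the symplectic condition $R(z)R^t(-z) = 1$. Differentiating it against the recursion shows that the off-diagonal solution is automatically compatible, while the relation forces the even-order diagonal constants to vanish and leaves the odd-order constants $a_1, a_3, a_5, \dotsc$ free. To confirm that this residual freedom is exactly right multiplication by $\exp(a_1 z + a_3 z^3 + \cdots)$, note that such a diagonal factor commutes with $e^{\mathbf u/z}$ and is constant in the $u_i$, so it preserves the property of solving the quantum differential equation; and it is itself symplectic, since for diagonal (hence symmetric) $a_i$ and odd exponents one computes $M(z)M^t(-z) = \exp(\sum a_{2j+1} z^{2j+1})\exp(-\sum a_{2j+1} z^{2j+1}) = 1$.

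Finally, in the conformal case I would impose $z\frac{\mathrm d}{\mathrm dz}R + L_E R = 0$. Order by order this reads $(k + L_E)R_k = 0$, forcing each $R_k$ to be homogeneous of a definite weight; the free diagonal constants, being genuine constants, cannot satisfy this unless they vanish, which pins down $R$ uniquely. That this homogeneous $R$ still satisfies the symplectic condition follows from a uniqueness argument: both $R(z)R^t(-z)$ and the identity satisfy the same first-order linear differential equation in $z$ induced by the homogeneity relation and agree at $z = 0$, so they coincide. I expect the main obstacle to be the combined bookkeeping of the recursion and the symplectic condition, namely verifying that imposing $R(z)R^t(-z) = 1$ is consistent with the recursion at every order and cuts the ambiguity down to precisely the odd diagonal constants, neither more nor fewer.
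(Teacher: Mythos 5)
The paper itself offers no proof of this proposition---it defers to \cite{Gi01a}---so I am comparing your sketch with the standard argument it invokes. Your skeleton is the right one and matches Givental's: pass to the normalized idempotent frame, observe that semisimplicity makes the commutator with $\mathrm d\mathbf u$ invertible on off-diagonal matrices, determine the off-diagonal part of $R_{k+1}$ algebraically and the diagonal part by a first-order ODE solvable up to a constant diagonal matrix per order, so that any two solutions differ by right multiplication by a constant diagonal series $C(z)$. The genuine gap is the existence half of the symplectic statement, which you assert (``differentiating it against the recursion shows\dots'') and then yourself flag as the main obstacle without closing it. The difficulty is concrete: at even order the condition $R(z)R^t(-z)=1$ prescribes the diagonal of $R_{k+1}$ in terms of lower-order data, but that diagonal is already forced to solve an ODE in the $u_i$ and is only adjustable by a \emph{constant}; so one must prove that the symmetric defect is constant in $\mathbf u$, and that is precisely the nontrivial content. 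The standard way to close it is a lemma absent from your sketch: since $\Psi$ is the transition matrix from an orthonormal frame, $D_i = \Psi^{-1}\partial_{u_i}\Psi$ is anti-symmetric, and a direct computation then shows $\bigl(R^t(-z)\bigr)^{-1}$ solves the same equation $z\,\partial_{u_i}R = [E_{ii},R] - zD_iR$ as $R$. Hence $R^t(-z)R(z) = C(z)$ is a constant diagonal series, even in $z$ by its own symmetry, and replacing $R$ by $R\,C(z)^{-1/2}$ produces a symplectic solution; the residual ambiguity computation $D(z)D(-z)=1$ then yields exactly $\exp(a_1z+a_3z^3+\dotsb)$.

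Your final argument that the homogeneous $R$ is \emph{automatically} symplectic is flawed as stated, not merely incomplete. For $P(z) = R(z)R^t(-z)$ the homogeneity relation gives $z\frac{\mathrm d}{\mathrm dz}P + L_EP = 0$, but this is not ``a first-order linear differential equation in $z$'': $L_E$ differentiates in the moduli directions, so it is a PDE, and ``same equation plus agreement at $z=0$'' carries no uniqueness force. Indeed the relation only says each coefficient $P_k$ is homogeneous of degree $-k$ under the Euler field, and nonzero such functions abound---the entries of the $R_k$ themselves, with their poles at the discriminant, are exactly of this type. To repair it you must first prove that $P$ is constant in $\mathbf u$ (an induction using the $u$-direction equations, where anti-symmetry of $D_i$ again enters to show $[D_i,P_k]$ has vanishing diagonal), after which homogeneity forces $P_k=0$ for $k\geq 1$; alternatively, and more cleanly, apply the uniqueness of the homogeneous solution, which you did establish, to $\bigl(R^t(-z)\bigr)^{-1}$: it is again a homogeneous solution of the same equation, hence equals $R(z)$, which is the symplectic condition. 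Two smaller points you gloss over: the compatibility (integrability) of the diagonal ODEs for the different $u_i$, which follows from flatness of the underlying connection, and the existence (not just uniqueness) of a homogeneous solution in the conformal case.
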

\begin{rem}
  The matrix $R$ should be thought as the matrix representation of an
  endomorphism in the basis of normalized idempotents. The symplectic
  condition in Proposition~\ref{prop:fund} is then the same as in
  Definition~\ref{def:slg}.
\end{rem}
\begin{rem}
  The exponential in \eqref{eq:SR} has to be thought as a formal
  expression. All the quantities in Proposition~\ref{prop:fund} are
  only defined after base change of $A$ necessary to define the canonical
  coordinates.
\end{rem}
\begin{rem}
  The quantum differential equation is equivalent to the differential
  equation
  \begin{equation} \label{eq:qdercan}
    [R, \mathrm d\mathbf u] + z\Psi^{-1} \mathrm d(\Psi R) = 0
  \end{equation}
  for $R$.
\end{rem}

In the conformal case Teleman showed that the uniquely determined
homogeneous $R$-matrix of Proposition~\ref{prop:fund} is the one
appearing in the reconstruction, taking the trivial theory to the given
one.

Equivariant projective spaces $\PP^m$ only give a quasi-conformal
Frobenius manifold. However Givental showed, and we will recall the
proof in Appendix~\ref{sec:givenloc}, that in this case in the
reconstruction one should take $R$ such that in the classical limit $q
\to 0$ it assumes the diagonal form
\begin{equation} \label{eq:limit}
  R|_{q = 0} = \exp(\diag(b_0, \dotsc, b_m)),
\end{equation}
where, using the notation from Section~\ref{sec:cohft:two},
\begin{equation*}
  b_j = \sum_{i = 1}^\infty \frac{B_{2i}}{2i(2i - 1)} \sum_{l \neq j} \left(\frac z{\lambda_l - \lambda_j}\right)^{2i - 1}.
\end{equation*}
The $R$-matrix is uniquely determined by this additional property and
the homogeneity.

\subsection{The two CohFTs} \label{sec:cohft:two}

The cohomological field theory corresponding to the
$A_{m + 1}$-singularity $f(X) = X^{m + 2}/(m + 2)$ is defined using
Witten's $(m + 2)$-spin class on the moduli of curves with
$(m + 2)$-spin structures. See \cite{PPZ15} for a discussion of
different constructions of Witten's class. In comparison to
\cite{PPZ15} we use a different normalization for Witten's class and a
different basis for the free module in order to have a more direct
comparison to the $\PP^m$-theory.

The CohFT is based on the rank $(m + 1)$ free module of versal
deformations
\begin{equation*}
  f_t(X) = \frac{X^{m + 2}}{m + 2} + t^m X^m + \dotsb + t^1 X + t^0
\end{equation*}
of $f$. In this article, using the deformation from
Section~\ref{sec:cohft:frobenius}, we will view the CohFT as being
based on
\begin{equation*}
  k_{A_{m + 1}} = \QQ[t^1, \dotsc, t^m],
\end{equation*}
the space of regular functions on the Frobenius manifold where the
$t^0$-coordinate vanishes. Because of dimension constraints we do not
need to look at formal functions, and because the CohFT stays constant
along the $t^0$ direction we can restrict to the $(t^0 = 0)$-subspace.

The algebra structure is given by $k_{A_{m + 1}}[X] / (f'_t)$, where
$X^\mu$ corresponds to $\frac\partial{\partial t^\mu}$. The metric is
given by the residue pairing
\begin{equation*}
  \eta(a, b) = \frac 1{2\pi\imath} \oint \frac{ab}{f'_t(X)} \mathrm dX.
\end{equation*}
Written as a matrix in the basis $1, \dotsc, X^m$, the metric $\eta$
has therefore zeros above the antidiagonal, ones at the antidiagonal
and again zeros in the first antidiagonal below it. Notice also that
$\eta$ has no dependence on $t^1$. Therefore, while the $t^\mu$ do not
give a basis of flat vector fields on the Frobenius manifold, there is
a triangular matrix independent of $t^1$, sending the $1, \dotsc, X^m$
to a basis of flat vector fields such that $X$ is mapped to
itself. With this we can pretend that the $t^\mu$ were flat
coordinates if we consider in the quantum differential equation only
differentiation by $t^1$.

\medskip

For $(\CC^*)^{m + 1}$-equivariant $\PP^m$ the CohFT is based on the
equivariant Chow ring
\begin{equation*}
  A_{(\CC^*)^{m + 1}}^* (\PP^m)[\![q]\!] \cong k_{\PP^m}[H] / \prod_{i = 0}^m (H - \lambda_i),
\end{equation*}
of $\PP^m$, an $(m + 1)$-dimensional free module over
\begin{equation*}
  k_{\PP^m} = \QQ[\lambda_0, \dotsc, \lambda_m][\![q]\!],
\end{equation*}
and depends on the Novikov variable $q$ and the torus parameters
$\lambda_i$. We will not consider the deformation from
Section~\ref{sec:cohft:frobenius}. The algebra structure is given by
the small quantum equivariant Chow ring
\begin{equation*}
  QA_{(\CC^*)^{m + 1}}^* (\PP^m) \cong k_{\PP^m}[H] / \left(\prod_{i = 0}^m (H - \lambda_i) - q\right)
\end{equation*}
and the pairing is the Poincaré pairing
\begin{equation*}
  \eta(a, b) = \frac 1{2\pi\imath} \oint \frac{ab}{\prod_{i = 0}^m (H - \lambda_i)} \mathrm dH
\end{equation*}
in the equivariant Chow ring.

\medskip

To match up this data we set
\begin{align*}
  X
  =& H - \bar\lambda, \\
  X^{m + 1} + \sum_{\mu = 0}^{m - 1} (\mu + 1)t^{\mu + 1} X^\mu
  =& \prod_{i = 0}^m (X + \bar\lambda - \lambda_i) - q,
\end{align*}
where
\begin{equation*}
  \bar\lambda = \sum_{i = 0}^m \frac{\lambda_i}{m + 1}.
\end{equation*}
So in particular
\begin{equation*}
  t^1 = -q + \prod_{i = 0}^m (\bar\lambda - \lambda_i) =: -q - \lambda
\end{equation*}
and we have described a ring map
\begin{equation*}
  \Phi: k_{A_{m + 1}}[\lambda] \to k_{\PP^m},
\end{equation*}
whose image are the polynomials, symmetric in the torus parameters and
vanishing if all torus parameters coincide. Therefore, after base
extension, the Frobenius algebras from the $A_{m + 1}$-singularity and
equivariant $\PP^m$ match completely up.

On the $\PP^m$-side, let $Q_i$ be the power series solution to
\begin{equation*}
  \prod_{i = 0}^m (Y + \bar\lambda - \lambda_i) = q
\end{equation*}
with limit $\lambda_i - \bar\lambda$ as $q \to 0$. In particular, the
$Q_i$ are solutions to
\begin{equation*}
  Y^{m + 1} + \sum_{\mu = 0}^{m - 1} (\mu + 1)t^{\mu + 1} Y^\mu.
\end{equation*}
On the $A_{m + 1}$-side, let the $Q_i$ be the solutions to this
equation in any order. On both sides we can then define
\begin{equation*}
  \Delta_i = \prod_{j \neq i} (Q_i - Q_j)
  = (m + 1) Q_i^m - \sum_{\mu = 1}^{m - 1} (\mu + 1)\mu t^{\mu + 1} Q_i^{\mu - 1}
\end{equation*}
and the \emph{discriminant}
\begin{equation*}
  \disc = \prod_i \Delta_i \in k_{A_{m + 1}}.
\end{equation*}
The choice of the $Q_i$ gives a bijection between the idempotents
\begin{equation*}
  \epsilon_i = \frac{\prod_{j \neq i} (X - Q_j)}{\Delta_i}.
\end{equation*}
We will also need to make a choice of square roots of the $\Delta_i$
to be able to define the normalized idempotents
\begin{equation*}
  \tilde\epsilon_i = \frac{\prod_{j \neq i} (X - Q_j)}{\sqrt{\Delta_i}}.
\end{equation*}

The $A_{m + 1}$-theory is conformal with Euler vector field
\begin{equation*}
  E = \sum_{i = 1}^m \frac{m + 2 - i}{m + 2} t^\mu \frac{\partial}{\partial t^\mu},
\end{equation*}
while the equivariant $\PP^m$-theory is semi-conformal with Euler
vector field
\begin{equation*}
  E = (m + 1)q \frac{\partial}{\partial q} + \sum_{i = 0}^m \lambda_i \frac{\partial}{\partial \lambda_i}.
\end{equation*}

\subsection{Relations from CohFTs} \label{sec:cohft:rels}

Let $\Omega$ be a semisimple CohFT defined on $V$ over $A$. Formal
properties of the reconstruction theorem will imply tautological
relations. The main point is that the $R$-matrix from the trivial
theory written in flat coordinates lives only in
\begin{equation*}
  \End(V \otimes_A B)[\![z]\!],
\end{equation*}
for some $\QQ$-algebra extension $B$\footnote{In our examples
  $B = A[\disc^{-1}]$.} of $A$. Let $C$ be the $A$-module quotient
fitting into the exact sequence
\begin{equation} \label{eq:ABC}
  0 \to A \to B \xrightarrow{p} C \to 0.
\end{equation}
The reconstruction gives elements
\begin{equation*}
  \overline\Omega_{g, n} \in \strata_{g, n} \otimes (V^*)^{\otimes n} \otimes B.
\end{equation*}
However since we have started out with a CohFT defined over $A$, we
know that the projection of
\begin{equation*}
  p(\overline\Omega_{g, n}) \in \strata_{g, n} \otimes (V^*)^{\otimes n} \otimes C
\end{equation*}
to $R^*(\Mbar_{g, n}) \otimes (V^*)^{\otimes n} \otimes C$ has to
vanish\footnote{Assuming that reconstruction holds in this
  case.}. Since $C$ is a $\QQ$-vector space, we obtain a system of
vector spaces $T_{g, n}^\Omega$ of relations. The complete system
$\bar T_{g, n}^\Omega$ of \emph{tautological relations obtained from
  the CohFT $\Omega$} is the vector space generated by
\begin{equation*}
  \xi_* (\pi_*(T_{g_1, n_1 + m}^\Omega P) \times \strata_{g_2, n_2} \times \dotsb \times \strata_{g_k, n_k}),
\end{equation*}
where $P$ is the vector space of polynomials in $\psi$-classes, and
$\xi_*$ and $\pi_*$ are the formal analogues of the push-forwards
along gluing and forgetful maps.

We say that a vector space of tautological relations $T_{g, n}$
implies another $T'_{g, n}$ if the vector space, obtained from $T_{g,
  n}$ by the completion process as described right above, is contained
in $T'_{g, n}$. Using this definition we can also define an
equivalence relation between vector spaces of tautological relations.

Let us describe two relation preserving actions on the space of all
CohFTs on $V$ over $A$. The first is an action of the multiplicative
monoid of $A$. The action of $\varphi \in A$ is given by
multiplication by $\varphi^d$ in codimension $d$. This replaces the
$R$-matrix $R(z)$ of the theory by $R(\varphi z)$. Since
multiplication by $\varphi$ is well-defined in $C$, relations are
preserved. The second action is the action of an $R$-matrix defined
over $A$.

The second action automatically proves equivalence of relations since
$R$-matrices are always invertible. Similarly, the first action proves
equivalence if $\varphi$ is invertible.

\emph{Extending scalars} also preserves relations. By this we mean
tensoring $\Omega$ with $A \to A'$ under the condition that this
preserves the exactness of the sequence \eqref{eq:ABC}. We call the
special case when $A' = A/I$ for some ideal $I$ of $A$ a \emph{limit}.
If $C \to C \otimes_A A'$ is injective, extending scalars proves an
equivalence of relations.

Let us again state our now well-defined results.
\setcounter{thm}{0}
\begin{thm} \label{thm:limit}
  The relations from the equivariant Gromov-Witten theory of $\PP^m$
  imply the $(m + 2)$-spin relations, both CohFTs as defined in
  Section~\ref{sec:cohft:two}.
\end{thm}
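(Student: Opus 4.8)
The plan is to prove Theorem~\ref{thm:limit} by exhibiting the $(m+2)$-spin relations as a limit, in the technical sense defined at the end of Section~\ref{sec:cohft:rels}, of the equivariant $\PP^m$-relations. The key structural fact I would exploit is the ring map $\Phi: k_{A_{m+1}}[\lambda] \to k_{\PP^m}$ constructed in Section~\ref{sec:cohft:two}, which identifies the Frobenius algebra of the $A_{m+1}$-singularity (after adjoining $\lambda$) with that of equivariant $\PP^m$, matching the idempotents $\epsilon_i$, the normalized idempotents $\tilde\epsilon_i$, and the discriminant $\disc = \prod_i \Delta_i$ on both sides. Since relations from a CohFT are produced entirely from the $R$-matrix taking the trivial theory to the given one, the heart of the argument is to compare the two $R$-matrices under $\Phi$ and then to realize the passage from $\PP^m$ to $(m+2)$-spin as the limit $q \to 0$ combined with a specialization of the torus parameters.

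\medskip

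First I would set up the comparison precisely. Both theories are (quasi-)conformal, so by Proposition~\ref{prop:fund} each has a \emph{unique} homogeneous $R$-matrix, determined on the $\PP^m$-side by the additional normalization \eqref{eq:limit} at $q = 0$. Because the two Frobenius structures agree under $\Phi$, and because the quantum differential equation \eqref{eq:qdercan} is written intrinsically in terms of $\Psi$, $\mathbf u$, and the canonical coordinates, the pullback under $\Phi$ of the $\PP^m$-side $R$-matrix should satisfy the same differential equation as the $A_{m+1}$-side $R$-matrix. The main task is to check that they satisfy the same homogeneity and limiting normalization so that uniqueness forces them to coincide after applying $\Phi$. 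Here the conformal Euler field $E = \sum_i \tfrac{m+2-i}{m+2} t^\mu \partial_{t^\mu}$ on the spin side must be matched against the quasi-conformal field $(m+1)q\,\partial_q + \sum_i \lambda_i\,\partial_{\lambda_i}$ on the $\PP^m$ side; the variable $t^1 = -q - \lambda$ links the Novikov parameter to the spin deformation, and the limit $q \to 0$ is exactly the specialization $t^1 \mapsto -\lambda$.

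\medskip

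Next I would organize the argument in the language of relations. Applying the extension-of-scalars/limit formalism: set $A = k_{A_{m+1}}$ on the spin side and realize it inside $k_{\PP^m}$ via $\Phi$. The claim ``$\PP^m$-relations imply $(m+2)$-spin relations'' then amounts to showing that every element of the spin relation system $\bar T^{\text{spin}}_{g,n}$ arises, after the completion process (push-forward under gluing and forgetful maps, multiplication by $\psi$-polynomials), from an element of $\bar T^{\PP^m}_{g,n}$ specialized through $\Phi$ and the limit $q \to 0$. Since $B = A[\disc^{-1}]$ in both examples and $\disc$ is matched by $\Phi$, the exact sequence \eqref{eq:ABC} is respected; I would verify that the quotient $C$ on the spin side receives the image of $C$ on the $\PP^m$ side compatibly with $p$, so that a vanishing projection upstairs produces a vanishing projection downstairs. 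The two relation-preserving actions described in Section~\ref{sec:cohft:rels}—rescaling $R(z) \mapsto R(\varphi z)$ and multiplying by an $R$-matrix defined over $A$—give me the freedom to absorb the normalization discrepancy at $q = 0$ recorded in \eqref{eq:limit}, which is diagonal and hence an honest $R$-matrix action.

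\medskip

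\textbf{The main obstacle} I anticipate lies in the limit $q \to 0$: the $R$-matrices live in $\End(V \otimes_A B)[\![z]\!]$ with $B = A[\disc^{-1}]$, and $\disc$ can degenerate as $q \to 0$, so the specialization is not a naive substitution. One must check that the relations, which live in the quotient $C$, survive the limit—that is, that setting $q = 0$ does not collapse the piece of $C$ carrying the spin relations, and that no spurious poles in $\disc^{-1}$ obstruct the specialization. Controlling this requires understanding the $q \to 0$ asymptotics of the canonical coordinates $u_i$ and of $\Delta_i = \prod_{j\neq i}(Q_i - Q_j)$, for which the oscillating-integral expressions for the $R$-matrix (promised in Section~\ref{sec:osc}) are the natural tool. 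The fact that this is only an implication and not an equivalence for $m > 2$ strongly suggests that the limit map on relation spaces is not injective in general, so I would expect the proof to produce $(m+2)$-spin relations as a proper image, and I would not attempt to invert the construction.
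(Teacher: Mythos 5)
Your general scaffolding --- the matchup $\Phi$ from Section~\ref{sec:cohft:two}, extension of scalars, the relation-preserving actions, and the use of the oscillating integrals of Section~\ref{sec:osc} to control poles in $\disc$ --- is the same as the paper's, but the engine of your argument fails at its central step, in two related places. First, the claim that uniqueness forces the $\Phi$-pullback of the $\PP^m$-$R$-matrix to \emph{coincide} with the $A_{m+1}$-$R$-matrix is false: uniqueness in Proposition~\ref{prop:fund} is uniqueness \emph{given a homogeneity condition}, and the two Euler vector fields do not correspond under $\Phi$. Under the matchup the equivariant grading ($\deg \lambda_i = 1$, $\deg q = m + 1$) gives $t^\mu$ degree $m + 2 - \mu$, while the spin grading gives it degree $\frac{m + 2 - \mu}{m + 2}$; the base gradings differ by a factor of $m + 2$ but $z$ has degree $1$ in both, so the two homogeneity constraints are incompatible and the two matrices genuinely differ. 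Indeed, if they coincided under $\Phi$, the two systems of relations would be equivalent outright, which would make Theorem~\ref{thm:equiv} contentless and contradict the evidence of Section~\ref{sec:noequiv} that no equivalence holds for $m > 2$. Second, and consequently, $q \to 0$ is the wrong limit: by the normalization \eqref{eq:limit}, as $q \to 0$ the $\PP^m$-$R$-matrix tends to the diagonal matrix of Bernoulli series, i.e.\ a Mumford/Hodge-type matrix, not to the spin $R$-matrix, so the relations surviving that specialization are far weaker than the $(m+2)$-spin relations you need to produce. (Nor can the diagonal matrix \eqref{eq:limit} be ``absorbed'' by the second relation-preserving action, since its entries have poles at $\lambda_i = \lambda_j$ and it is therefore not an $R$-matrix defined over the base ring $A$.)

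The idea your proposal is missing is the rescaling $z \mapsto z\lambda^{-1}$, followed by the limit $\lambda^{-1} \to 0$ with the spin coordinates $t^\mu$ held fixed; under $q = -t^1 - \lambda$ this is a large-$q$ regime, essentially opposite to $q \to 0$. The rescaling is exactly what reconciles the two homogeneities: since $\deg \lambda = m + 1$, the matrix $R'(z) = R_{\PP^m}(z/\lambda)$ is homogeneous for a grading in which $\deg z = m + 2$, i.e.\ for $(m+2)$ times the spin grading, so on $\lambda$-independent quantities the two homogeneity conditions agree. The technical core, for which your proposal has no substitute, is then Lemma~\ref{lem:limit}: writing $R' = \sum_i R'_i z^i$ with entries of $R'_i$ in $\lambda^{-i} k_{A_{m+1}}[Q_0, \dotsc, Q_m, \Delta_0^{-1/2}, \dotsc, \Delta_m^{-1/2}, \lambda]$ (this input comes from the oscillating integrals), one shows by induction on $i$ --- using the differential equation \eqref{eq:qdeRP}, which determines the off-diagonal entries of $R'_i$ algebraically and the diagonal entries up to constants of integration in $t^1$, together with the homogeneity, which rules out the ambiguous constants --- that no positive power of $\lambda$ occurs. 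Combined with the uniqueness lemma characterizing $R_{A_{m+1}}$ by the same equation, homogeneity and pole bound, this identifies the $\lambda^0$-part of $R'$ with $R_{A_{m+1}}$, so the spin relations sit inside the rescaled $\PP^m$-relations as their $\lambda^0$-part. Your closing intuition that the argument yields only a one-way implication is correct, but the information loss occurs in this quotient $\lambda^{-1} \to 0$, not at $q \to 0$.
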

The main statement necessary to be proven here is that the $R$-matrix
for $\PP^m$ after replacing $z \mapsto z\lambda^{-1}$ admits the limit
$\lambda^{-1} \to 0$ and that this limit is the $R$-matrix for the
$A_{m + 1}$-theory. In order for this to make sense, one uses the
matchup from Section~\ref{sec:cohft:two} and views both as being
defined over
\begin{equation*}
  \QQ[\![\lambda_0, \dotsc, \lambda_m, q]\!][\lambda^{-1}]
\end{equation*}
In Section~\ref{sec:osc} we will see that for both original theories
to define the $R$-matrix it is enough to localize by $\disc$. So the
extension of scalars does not lose relations.

Motivated from Section~\ref{sec:osc:spin} let us call the limit $t^2,
\dotsc, t^m = 0$ the \emph{Airy limit}. For $\PP^m$ the Airy limit
concretely means, assuming the sum of all torus weights is zero, that
we restrict ourselves to the case that up to a factor the torus
weights are the $(m + 1)$-th roots of unity.
\begin{thm} \label{thm:equiv}
  In the Airy limit the $\PP^m$- and $(m + 2)$-spin relations are
  equivalent.
\end{thm}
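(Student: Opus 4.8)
The plan is to upgrade the implication of Theorem~\ref{thm:limit} to an equivalence by showing that, on the Airy locus, the spin $R$-matrix is not merely the $\lambda^{-1}\to 0$ limit of the rescaled $\PP^m$ $R$-matrix but is \emph{equal} to it. Recall that the substitution $z\mapsto z\lambda^{-1}$ is the relation-preserving action by $\varphi=\lambda^{-1}$, and that such an action proves equivalence as soon as $\varphi$ is invertible. I would therefore carry out the whole comparison over the ring $\QQ[\![\lambda_0,\dots,\lambda_m,q]\!][\lambda^{-1}]$ of Theorem~\ref{thm:limit}, restricted to the Airy locus $t^2=\dots=t^m=0$, in which $\lambda$ is invertible by construction. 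If the two rescaled $R$-matrices coincide there, the invertible action $\varphi=\lambda^{-1}$ identifies the two relation systems and we are done.

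First I would check that on the Airy locus the two Frobenius manifolds literally agree. Putting $t^2=\dots=t^m=0$ in the match-up of Section~\ref{sec:cohft:two} collapses both quantum products to $k[X]/(X^{m+1}+t^1)$ and both pairings to the residue pairing of this algebra, where $t^1=-q-\lambda$. Hence the canonical coordinates $u_i$ and the transition matrix $\Psi$ depend only on $t^1$ and are the same on both sides, and the discriminant specialises to a nonzero scalar multiple of $(t^1)^m$, so that localising by $\disc$ amounts to inverting $t^1$. Consequently both $R$-matrices satisfy the \emph{same} quantum differential equation~\eqref{eq:qdercan} along the $t^1$-direction, and by Proposition~\ref{prop:fund} they can differ only by right multiplication by a matrix $\exp(a_1z+a_3z^3+\dotsb)$ with constant diagonal $a_{2i+1}$.

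Next I would pin down this ambiguity using the oscillating-integral descriptions of Section~\ref{sec:osc}. On the Airy locus both $R$-matrices are the stationary-phase asymptotics of oscillating integrals whose phase is $X^{m+2}/(m+2)+t^1X$ (a generalised Airy integral); the sole difference between the two integrals is the normalisation of $z$, which is exactly the substitution $z\mapsto z\lambda^{-1}$ relating the equivariant and spin gradings. I would verify that after this substitution the phases, their critical points and the associated Gaussian measures coincide order by order, so that the two asymptotic series agree identically as power series in $z$. Equivalently, one checks that the normalisation~\eqref{eq:limit} fixing $R^{\PP^m}$ is carried by $z\mapsto z\lambda^{-1}$ into the conformal normalisation that fixes $R^{A_{m+1}}$, forcing every $a_{2i+1}$ above to vanish.

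The main obstacle is precisely this exact, all-orders matching. In the general case treated in Theorem~\ref{thm:limit} the two phases genuinely differ, and only the leading term of the stationary-phase expansion --- recovered in the degenerate limit $\lambda^{-1}\to 0$ --- reproduces the spin integral, which is why one obtains implication rather than equivalence. On the Airy locus the phases coincide, so one expects the single invertible rescaling to account for the \emph{entire} expansion; the mechanism I expect to make this work is that, once $\Psi$ and $\mathbf u$ depend on $(\lambda,q)$ only through $t^1$, the homogeneity constraint $z\frac{\mathrm d}{\mathrm dz}R+L_ER=0$ leaves no room for the transverse variable, and the higher-order corrections that survive the limit in the general case are here killed by the match of normalisations. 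Verifying this compatibility of the two normalisations is the technical heart of the proof.
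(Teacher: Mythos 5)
Your proposal fails at its central step: the claim that on the Airy locus the two $R$-matrices satisfy the same quantum differential equation after the constant rescaling $z \mapsto z\lambda^{-1}$, and hence coincide. The Frobenius algebras do match up (both become $k[X]/(X^{m+1}+t^1)$ with $t^1 = -q-\lambda$), but the differential equations do not: the $\PP^m$-equation \eqref{eq:qdeRP} involves $zq\,\mathrm{d}/\mathrm{d}q$, which under the match-up is $-zq\,\mathrm{d}/\mathrm{d}t^1$ with the \emph{nonconstant} factor $q = -t^1-\lambda$, whereas the spin equation involves $z\,\mathrm{d}/\mathrm{d}t^1$. After the constant rescaling $z\mapsto z\lambda^{-1}$, the $\PP^m$-matrix therefore satisfies the equation with an extra factor $\left(1+\frac{t^1}{\lambda}\right)$ --- this is exactly what appears in the proof of Lemma~\ref{lem:limit} --- so Proposition~\ref{prop:fund} cannot be invoked to identify the two matrices up to $\exp(a_1z+a_3z^3+\dotsb)$; equality re-emerges only in the limit $\lambda^{-1}\to 0$, which is not an invertible operation and yields only the implication of Theorem~\ref{thm:limit}. (Even granting identical equations, your normalisation argument would still have to reconcile the quasi-conformal normalisation \eqref{eq:limit} at $q = 0$ with the conformal homogeneity of the spin theory, which are different constraints.) Your oscillating-integral argument fails for the same underlying reason: the spin integral is one-dimensional with phase $x^{m+2}/(m+2)+t^1x$, while the $\PP^m$-integral is $m$-dimensional with phase $\sum_j(e^{T_j}+\lambda_jT_j)$ on $\{\sum T_j=\ln q\}$; their stationary-phase expansions agree only to leading order in $\lambda^{-1}$, not identically after a rescaling of $z$.

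The paper's actual proof quantifies precisely how far your claim is from being true. One seeks $\tilde R_{\PP^m}(z) = R(z)\,\tilde R_{A_{m+1}}(z\varphi)$ with $R$ free of poles in the discriminant; using Lemma~\ref{lem:notpol}, the pole-freeness of $R$ forces $\varphi$ to be the \emph{unique} solution of $-q^{-1}=\varphi^{-1}+L_E\varphi^{-1}$, namely $\varphi^{-1} = \lambda^{-1}\sum_{i=0}^\infty \frac{m+2}{m+2+i(m+1)}\left(-\frac{t}{\lambda}\right)^i$, so that $\varphi = \lambda + \frac{m+2}{2m+3}\,t + \dotsb$ is genuinely different from $\lambda$; and even with this $\varphi$ the intermediate matrix $R$ is not the identity --- the paper needs Lemma~\ref{lem:fund}, an inductive analysis of the $z^i$-parts of \eqref{eq:de:comp}, to show that its entries are polynomials in $t$, hence that the $R$-matrix action is defined over the base ring. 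Equivalence then follows from the two relation-preserving actions of Section~\ref{sec:cohft:rels}, applied with this nonconstant $\varphi$ and nontrivial $R$. So your outer logic (invertible rescaling plus base-ring $R$-matrix action proves equivalence) is exactly the paper's, but your instantiation $\varphi=\lambda$, $R=1$ makes the required identity false, and no repair keeping $R=1$ is possible.
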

The main point for the proof is to show there is a series
\begin{equation*}
  \varphi \in \lambda \QQ[\![t^1\lambda^{-1}]\!],
\end{equation*}
and an $R$-matrix $R$ without poles in $\disc$ such that the Airy
limit $\PP^m$-$R$-matrix is obtained from the Airy limit $A_{m +
  1}$-$R$-matrix by applying the transformation $z \mapsto z\varphi$,
followed by the action of $R$. We will show in the proof that there is
only one possible choice for $\varphi$. For Theorem~\ref{thm:equiv}
both theories can be viewed as living over
\begin{equation*}
  \QQ[\![\lambda_0, \dotsc, \lambda_m, q, t^1\lambda^{-1}]\!][\lambda^{-1}]/(t^1 + q + \lambda, t^2, \dotsc, t^m).
\end{equation*}

In Section~\ref{sec:noequiv} we will give evidence that the method of
proof of Theorem~\ref{thm:equiv} does not work outside the Airy
limit. What we will show is that assuming a procedure as in the proof
of Theorem~\ref{thm:equiv} exists and is well-defined in the Airy
limit, the information that $\varphi$ was unique in the limit implies
that the $R$-matrix in the $R$-matrix action cannot be defined over
the base ring.

\subsubsection*{Relations from degree vanishing}

The more classical way of \cite{PPZ15} and \cite{rspin} to obtain
tautological relations works by considering cohomological degrees:
Assume that $\Omega$ is in addition quasi-homogenous for an Euler
vector field $E$ and that all $\beta_i$ vanish and all $\alpha_i$ are
positive. Then the quasi-homogeneity implies that the cohomological
degree of $\Omega_{g, n}(\frac\partial{\partial t^{i_1}}, \dotsc,
\frac\partial{\partial t^{i_n}})$ is bounded by
\begin{equation*}
  (g - 1)\delta + n - \sum_j \alpha_{i_j}.
\end{equation*}
However the reconstructed theory might also contain terms of higher
cohomological degree. These thus have to vanish, giving tautological
relations.

Notice that these relations coming from degree considerations are
implied from the relations we have described previously: With respect
to the grading on $B$ induced by the Euler vector field, no element of
$A$ has negative degree. Therefore the negative degree parts of $B$
and $C$ are isomorphic. Thus, the homogeneity of the CohFT implies
that the degree vanishing relations are obtained from the previous
relations by restricting to the negative degree part of $C$.

The way of obtaining tautological relations by looking at poles in the
discriminant has already previously been studied by D. Zvonkine.

\section{Oscillating integrals} \label{sec:osc}

\subsection{For the \texorpdfstring{$A_{m + 1}$}{A(m +
    1)}-singularity} \label{sec:osc:spin}

We want to describe the $A_{m + 1}$-$R$-matrix in terms of asymptotics
of oscillating integrals. For the purposes of this article the
integrals can be treated as purely formal objects.

The quantum differential equation with one index lowered says that
\begin{align*}
  z\frac{\partial}{\partial t^1} S_{\mu k}
  =& S_{(\mu + 1)k}, && \text{for }\mu < m,\\
  z\frac{\partial}{\partial t^1} S_{mk}
  =& -\sum_{\mu = 0}^{m - 1} (\mu + 1)t^{\mu + 1} S_{\mu k},&&
\end{align*}
where the Greek indices stand for components in the basis of the
$X^\mu$. It is not difficult to see that the oscillating integrals
\begin{equation*}
  \frac 1{\sqrt{-2\pi z}} \int_{\Gamma_k} x^\mu \exp(f_t(x)/z) \mathrm dx,
\end{equation*}
where $f_t$ as before is the deformed singularity, for varying cycles
$\Gamma_k$, if convergent, provide solutions to this system of
differential equations, and also satisfy homogeneity with respect to
the Euler vector field.

For generic choices of parameters, to each critical point $Q_k$ there
corresponds a cycle $\Gamma_k$ constructed via the Morse theory of
$\Re(f_t(x)/z)$, which moves through that critical point in the
direction of steepest descent and avoids all other critical points. By
moving to the critical point and scaling coordinates we obtain
\begin{multline*}
  S_{\mu k}
  = \frac{e^{u_k/z}}{\sqrt{2\pi\Delta_k}} \int \left(\frac{x(-z)^{1/2}}{\sqrt{\Delta_k}} + Q_k\right)^\mu \\
  \exp\left(-\sum_{l = 2}^{m + 2} \frac{x^l(-z)^{(l - 2)/2}}{l!} \frac{f_t^{(l)}(Q_k)}{\Delta_k^{l/2}} \right) \mathrm dx,
\end{multline*}
where $u_k = f_t(Q_k)$. By the method of steepest descend, we obtain
the asymptotics as $z \to 0$ by expanding the integrand as a formal
power series in $z$ and integrating from $-\infty$ to $\infty$. Since
the $(l = 2)$-term in the sum is $-x^2/2$, we can use the formula for
the moments of the Gaussian distribution to write the asymptotics of
$\sqrt{\Delta_k} e^{-u_k/z} S_{\mu k}$ as a formal power series in $z$
with values in $k_{A_{m + 1}} [Q_k, \Delta_k^{-1}]$.

The entries of the $R$-matrix are then given by
\begin{equation*}
  R_{ik}
  \asymp \frac 1{\sqrt{\Delta_i}} e^{-u_k/z} \prod_{j \neq i} \left(\frac\partial{\partial t^1} - Q_j\right) S_{0k}.
\end{equation*}
Noticing that the change of basis from normalized idempotents to the
basis $1, X, \dotsc, X^m$ can be defined over $k_{A_{m + 1}} [Q_k,
\Delta_k^{-1/2}]$, recalling that $\disc = \prod \Delta_i$ and
applying Galois theory, we see that the endomorphism $R$ is defined
over $k_{A_{m + 1}}[\disc^{-1}]$.

\medskip

In the Airy limit $t^2, \dotsc, t^m \to 0$ the quantum differential
equation becomes the slightly modified higher Airy differential
equation \cite{Ko92}
\begin{equation*}
  \left(z\frac{\partial}{\partial t^1}\right)^{m + 1} S_{0k} = -t^1 S_{0k}.
\end{equation*}
The entries of the $R$-matrix in this case are therefore related to
the asymptotic expansions of the higher Airy functions and their
derivatives when their complex argument approaches $\infty$.

In the case of the $A_2$-singularity we do not need to take any limit
and discover the hypergeometric series $A$ and $B$ of Faber-Zagier in
the expansions of the (slightly modified) usual Airy function
\begin{multline*}
  \frac{e^{\frac 23 (t^1)^{3/2}/z}}{\sqrt{-2\pi z}} \int_{\Gamma_k} e^{\left(\frac{x^3}3 + t^1x\right)/z} \mathrm dx
  \asymp \frac 1{\sqrt{2\pi\Delta}} \int\limits_{-\infty}^\infty \exp\left(-\frac{x^2}2 - \frac{x^3}3 \frac{\sqrt{-z}}{\Delta^{3/2}} \right) \mathrm dx \\
  \asymp \frac 1{\sqrt{\Delta}} \sum_{i = 0}^\infty \frac{(6i - 1)!!}{(2i)!} \left(\frac{-z}{9\Delta^3}\right)^i
  = \frac 1{\sqrt{\Delta}} \sum_{i = 0}^\infty \frac{(6i)!}{(3i)!(2i)!} \left(\frac{-z}{72\Delta^3}\right)^i
\end{multline*}
and a derivative of it
\begin{equation*}
  \frac{e^{\frac 23 (t^1)^{3/2}/z}}{\sqrt{-2\pi z}} \int_{\Gamma_k} x e^{\left(\frac{x^3}3 + t^1x\right)/z} \mathrm dx
  \asymp \frac{\sqrt{-t^1}}{\sqrt{\Delta}} \sum_{i = 0}^\infty \frac{(6i)!}{(3i)!(2i)!} \frac{1 + 6i}{1 - 6i} \left(\frac{-z}{72\Delta^3}\right)^i.
\end{equation*}
Here $\Delta = 2\sqrt{-t^1}$. The cycle $\Gamma_k$ determines which
square-root of $(-t^1)$ we take.

\subsection{For equivariant \texorpdfstring{$\PP^m$}{Pm}} \label{sec:osc:PP}

Givental \cite{Gi01b} has given explicit solutions to the quantum
differential equation for projective spaces in the form of complex
oscillating integrals. Let us recall their definition and see how they
behave in the match up with the $(m + 2)$-spin theory.

Using the divisor axiom of Gromov-Witten invariants, the quantum
differential equation implies the differential equations
\begin{equation*}
  (D + \lambda_i) S_i = H \star S_i.
\end{equation*}
for the fundamental solutions $S_i$ at the origin. Here we have
written $D = z q\frac{\partial}{\partial q}$. Equivalently, the
equation says
\begin{equation*}
  D (S_i e^{\ln(q) \lambda_i/z})
  = H \star S_i e^{\ln(q) \lambda_i/z}.
\end{equation*}
Therefore the entries of $S$ with one index lowered satisfy
\begin{equation*}
  (D - \bar\lambda)(S_{\mu i} e^{\ln(q) \lambda_i/z})
  = S_{(\mu + 1)i} e^{\ln(q) \lambda_i/z},
\end{equation*}
where $S_{(m + 1)i}$ is defined such that
\begin{equation*}
  \prod_{j = 0}^m (D - \lambda_j) (S_{0i} e^{\ln(q) \lambda_i/z})
  = qS_{0i} e^{\ln(q) \lambda_i/z}.
\end{equation*}
The Greek indices stand for the basis of flat vector fields
corresponding to $1, H - \bar\lambda, \dotsc, (H - \bar\lambda)^m$.

Givental's oscillating integral solutions for $S_{0i}$ are stationary
phase expansions of the integrals
\begin{equation*}
  S_{0i} e^{\ln(q) \lambda_i/z}
  = (-2\pi z)^{-m/2} \int\limits_{\Gamma_i \subset \{\sum T_j = \ln q\}} e^{F_i(T)/z} \omega
\end{equation*}
along $m$-cycles $\Gamma_i$ through a specific critical point of
$F_i(T)$ inside a $m$-dimensional $\CC$-subspace of $\CC^{m + 1}$,
where
\begin{equation*}
  F_i(T) = \sum_{j = 0}^m (e^{T_j} + \lambda_jT_j).
\end{equation*}
The form $\omega$ is the restriction of $\mathrm dT_0 \wedge \dotsb
\wedge \mathrm dT_m$. To see that the integrals are actual solutions,
notice that applying $D - \lambda_j$ to the integral has the same
effect as multiplying the integrand by $e^{T_j}$.

There are $m + 1$ possible critical points at which one can do a
stationary phase expansion of $S_{0i}$. Let us write
$P_i = Q_i + \bar\lambda$ for the solution to
\begin{equation*}
  \prod_{i = 0}^m (X - \lambda_i) = q
\end{equation*}
with limit $\lambda_i$ as $q \to 0$. For each $i$ we need to choose
the critical point $e^{T_j} = P_i - \lambda_j$ in order for the factor
\begin{equation*}
  \exp(u_i / z) := \exp\left(\left(\sum_{j = 0}^m (P_i - \lambda_j + \lambda_j \ln(P_i - \lambda_j)) - \lambda_i\ln(q)\right)/z\right)
\end{equation*}
of $S_{0i}$ to be well-defined in the limit $q \to 0$. Shifting the
integral to the critical point and scaling coordinates by $\sqrt{-z}$
we find
\begin{equation*}
  S_{0i} = e^{u_i/z} \int \exp\left(-\sum_j (Q_i - \bar\lambda_j) \sum_{k = 3}^\infty \frac{T_j^k (-z)^{(k - 2)/2}}{k!}\right) \mathrm d\mu_i
\end{equation*}
for the conditional Gaussian distribution
\begin{equation*}
  \mathrm d\mu_i = (2\pi)^{-m/2} \exp\left(-\sum_j (Q_i - \bar\lambda_j) \frac{T_j^2}2\right) \omega.
\end{equation*}
The covariance matrices are given by
\begin{equation*}
  \sigma_i(T_k, T_l)
  = \frac 1{\Delta_i}
  \begin{cases}
    -\prod_{j \notin \{k, l\}} (Q_i - \bar\lambda_j), & \text{for }k \neq l, \\
    \sum_{m \neq k} \prod_{j \notin \{k, m\}} (Q_i - \bar\lambda_j), & \text{for }k = l.
  \end{cases}
\end{equation*}
From here we can see that the integral is symmetric in the
$\bar\lambda_j$ and therefore we can write its asymptotics as
$z \to 0$ completely in terms of data from $A_{m + 1}$. Since odd
moments of Gaussian distributions vanish we find that
$e^{-u_i/z} S_{0i}$ is a power series in $z$ with values in
$\Delta_i^{-1/2}k_{A_{m + 1}}[Q_i, \Delta_i^{-1}, \lambda]$.

So the entries of the $R$-matrix in the basis of normalized
idempotents are given by
\begin{equation*}
  \Delta_k^{-1/2} \prod_{j \neq k}(D + \lambda_i - P_j) \ (e^{-u_i/z}S_{0i}).
\end{equation*}
Since $\frac{\mathrm dP_i}{\mathrm dq} = \frac 1{\Delta_i}$ these
entries are in
\begin{equation*}
  k_{A_{m + 1}}[Q_0, \dotsc, Q_m, \Delta_0^{-1/2}, \dotsc, \Delta_m^{-1/2}, \lambda].
\end{equation*}
So, with the arguments from Section~\ref{sec:osc:spin}, the
endomorphism $R$ can be defined over $k_{\PP^m}[\disc^{-1}]$.

We need to check that the $R$-matrix given in terms of oscillating
integrals behaves correctly in the limit $q \to 0$. By definition, in
this limit $P_i \to \lambda_i$. By symmetry it is enough to consider
the $0$-th column. Set $x_i = e^{T_i}$. Then
\begin{multline*}
  \lim\limits_{q \to 0} R_{j0}
  \asymp \lim\limits_{q \to 0} e^{-u_0/z} \Delta_0^{-1/2} \prod_{k \neq j}(zq\frac{\mathrm d}{\mathrm dq} + \lambda_j - \lambda_k) \ S_{00} \\
  = \lim\limits_{q \to 0} \frac{e^{-u_0/z}}{\sqrt{\Delta_0} (-2\pi z)^{m/2}} \int e^{(\sum_k (e^{T_k} - (\lambda_0 - \lambda_k) T_k)) / z + \sum_{k \neq j} T_k} \omega \\
  = \lim\limits_{q \to 0} \frac{e^{-u_0/z}}{\sqrt{\Delta_0}(-2\pi z)^{m/2}} \int e^{(\sum\limits_{k \neq 0} (x_k - (\lambda_0 - \lambda_k) T_k) + \frac q{\prod\limits_{k \neq 0} x_k}) / z} \prod_{k \neq j} x_j \bigwedge_{k = 1}^m \mathrm dT_k.
\end{multline*}
In the last step we have moved to the chart
\begin{equation*}
  x_0 = \frac q{\prod_{j \neq 0} x_j}.
\end{equation*}
Since in this chart $\lim_{q \to 0} x_0 = 0$, we have that $R_{j0}$
vanishes unless $j = 0$. On the other hand in the limit $q \to 0$ the
integral for $R_{00}$ splits into one-dimensional integrals
\begin{equation*}
  \lim\limits_{q \to 0} R_{00}
  \asymp \lim\limits_{q \to 0} \frac{e^{-u_0/z}}{\sqrt{\Delta_0}(-2\pi z)^{m/2}} \prod_{k \neq 0} \int\limits_0^\infty e^{(x - (\lambda_0 - \lambda_k) \ln(x)) / z} \mathrm dx.
\end{equation*}
Let us temporarily set $z_k = -z/(\lambda_0 - \lambda_k)$. The
prefactors also split into pieces in the limit and we calculate the
factor corresponding to $k$ to be
\begin{multline*}
  \hspace{-1em}\frac{e^{(1 - \ln(\lambda_0 - \lambda_k)) / z_k}}{\sqrt{-2\pi z(\lambda_0 - \lambda_k)}} \int\limits_0^\infty e^{(x - (\lambda_0 - \lambda_k) \ln(x))/z} \mathrm dx
  = \frac{e^{(1 - \ln(1 / z_k)) / z_k}}{\sqrt{2\pi / z_k }} \Gamma\left(1 + \frac 1{z_k}\right)\\
  = \frac{e^{(1 - \ln(1 / z_k)) / z_k}}{\sqrt{2\pi z_k}} \Gamma\left(\frac 1{z_k}\right)
  \asymp \exp\left(\sum_{l = 1}^\infty \frac{B_{2l}}{2l(2l - 1)} \left(\frac z{\lambda_k - \lambda_0}\right)^{2l - 1} \right),
\end{multline*}
using Stirling's approximation of the gamma function in the last
step. So the product of the factors gives the expected limit
\eqref{eq:limit} of $R_{00}$ for $q \to 0$. This calculation gives a
proof for the results \cite{Io05} of Ionel on the main generating
function used in \cite{PaPi13P} and \cite{Ja13P} without having to
use Harer's stability results.

\section{\texorpdfstring{$\PP^m$}{Pm} relations imply \texorpdfstring{$(m + 2)$}{(m + 2)}-spin
  relations} \label{sec:limit}

We prove Theorem~\ref{thm:limit} in this section. As already mentioned,
for this it is enough to show that, after the change $z \mapsto
z\lambda^{-1}$, the $\PP^m$-$R$-matrix converges to the $A_{m +
  1}$-$R$-matrix in the limit $\lambda \to \infty$. For this we have
to compare the differential equations satisfied by the $R$-matrices.

Inserting the vector field corresponding to the hyperplane into
\eqref{eq:qdercan} and using the divisor equation as in
Section~\ref{sec:osc:PP} gives the equation
\begin{equation} \label{eq:qdeRP}
  [R_{\PP^m}, \xi] + zq \frac{\mathrm dR_{\PP^m}}{\mathrm dq} + zq\Psi^{-1} \frac{\mathrm d\Psi}{\mathrm dq} R_{\PP^m} = 0,
\end{equation}
where $\xi$ denotes the diagonal matrix of quantum multiplication by
$H - \bar\lambda$.

\begin{lem} \label{lem:limit}
  $R_{\PP^m}(z/\lambda)$ admits a limit $R$ for $\lambda \to \infty$. The
  matrix $R$ satisfies
  \begin{align*}
    [R, \xi] + z \frac{\mathrm dR}{\mathrm dt^1} + z\Psi^{-1} \frac{\mathrm d\Psi}{\mathrm dt^1} R = 0 \\
    z \frac{\mathrm dR}{\mathrm dz} + \sum_{\mu = 1}^m \frac{m + 2 - \mu}{m + 2} t^\mu \frac{\mathrm dR}{\mathrm dt^\mu} = 0
  \end{align*}
\end{lem}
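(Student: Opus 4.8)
The plan is to obtain both asserted equations as the $\lambda\to\infty$ limits, after the substitution $z\mapsto z\lambda^{-1}$, of the two differential equations that pin down $R_{\PP^m}$: the quantum connection \eqref{eq:qdeRP} and the homogeneity $z\frac{\mathrm dR}{\mathrm dz}+L_ER=0$ of Proposition~\ref{prop:fund} for the Euler field of Section~\ref{sec:cohft:two}. With respect to the $\PP^m$-grading $\deg z=\deg\lambda_i=1$, $\deg q=m+1$ (so that $\deg t^\mu=m+2-\mu$ and $\deg\lambda=m+1$) the latter amounts to the entries of $R_{\PP^m}$ being homogeneous of degree $0$, i.e.
\[
  z\frac{\partial R_{\PP^m}}{\partial z}+\sum_{\mu=1}^m(m+2-\mu)\,t^\mu\frac{\partial R_{\PP^m}}{\partial t^\mu}+(m+1)\lambda\frac{\partial R_{\PP^m}}{\partial\lambda}=0 .
\]
The first thing I would record is that, by the matchup of Section~\ref{sec:cohft:two}, the matrix $\xi=\diag(Q_0,\dots,Q_m)$ of quantum multiplication by $X=H-\bar\lambda$ and the transition matrix $\Psi$ are given by the same formulas in the $Q_i$ and $\Delta_i$ as on the $A_{m+1}$-side; in particular they depend only on $t^1,\dots,t^m$ and are constant in $\lambda$.

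Writing $\tilde R(z):=R_{\PP^m}(z\lambda^{-1})$ and using $t^1=-q-\lambda$ (so $\partial/\partial q=-\partial/\partial t^1$ and $q/\lambda=-1-t^1/\lambda$), the substitution turns \eqref{eq:qdeRP} into
\[
  [\tilde R,\xi]+z\Bigl(1+\tfrac{t^1}{\lambda}\Bigr)\Bigl(\frac{\partial\tilde R}{\partial t^1}+\Psi^{-1}\frac{\partial\Psi}{\partial t^1}\tilde R\Bigr)=0 ,
\]
and, using that $\lambda$ is homogeneous of degree $m+1$ in the torus weights, turns the homogeneity into
\[
  z\frac{\partial\tilde R}{\partial z}+\sum_{\mu=1}^m\frac{m+2-\mu}{m+2}\,t^\mu\frac{\partial\tilde R}{\partial t^\mu}+\frac{m+1}{m+2}\lambda\frac{\partial\tilde R}{\partial\lambda}=0 .
\]
Granting that the limit $R=\lim_{\lambda\to\infty}\tilde R$ exists and is approached to order $O(\lambda^{-1})$, every coefficient above extends regularly to $\lambda^{-1}=0$, the factor $1+t^1/\lambda$ tends to $1$, and the term $\lambda\,\partial_\lambda\tilde R$ tends to $0$. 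The first equation then becomes exactly \eqref{eq:qdercan} in the $t^1$-direction (note $\partial_{t^1}\mathbf u=\xi$, since $Q_i$ is a critical point of $f_t$ and $\partial_{t^1}f_t=X$), which is the first assertion, and the second becomes the $A_{m+1}$-homogeneity for the Euler field $E=\sum_\mu\frac{m+2-\mu}{m+2}t^\mu\partial_{t^\mu}$, which is the second.

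The main obstacle is precisely the existence of the limit together with the $O(\lambda^{-1})$ control, since $R_{\PP^m}$ only lives over $k_{\PP^m}[\disc^{-1}]$ and the substitution scales the coefficient of $z^k$ by $\lambda^{-k}$: existence is the statement that this coefficient is a polynomial in $\lambda$ of degree at most $k$. I would prove this from the explicit stationary-phase expansion of Section~\ref{sec:osc:PP}, where the coefficient of $z^k$ in each entry of $R_{\PP^m}$ is exhibited in $k_{A_{m+1}}[Q_i,\Delta_i^{-1/2}][\lambda]$; the delicate point is the Gaussian-moment bookkeeping showing that each extra power of $\lambda$ is accompanied by a power of $z$, so that the $\lambda$-degree of the $z^k$-coefficient is at most $k$. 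Multiplying by $\lambda^{-k}$ then makes each coefficient converge, the limit picking out the top $\lambda^k$-coefficient and the remainder being $O(\lambda^{-1})$, which is exactly what kills $\lambda\,\partial_\lambda\tilde R$ above. Alternatively, one can reconstruct $\tilde R$ order by order in $z$, taking the off-diagonal entries from the connection equation and the diagonal entries from the symplectic and homogeneity conditions, and check inductively that all recursion coefficients are regular at $\lambda^{-1}=0$; here the degree count of the oscillating integral is what guarantees that this formal solution does not acquire poles in $\lambda^{-1}$.
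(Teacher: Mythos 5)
Your reduction coincides with the paper's: after $z\mapsto z\lambda^{-1}$ you write down exactly the two transformed equations (the connection equation with the factor $1+t^1/\lambda$, and the rescaled homogeneity), and, granted existence of the limit with entries that are Laurent polynomials in $\lambda$, passing to $\lambda^{-1}=0$ gives the two asserted equations (your observation $\partial_{t^1}\mathbf u=\xi$ is correct). The gap is in the crux, the existence of the limit. Your primary route --- a direct degree bound from the stationary-phase expansion, ``each extra power of $\lambda$ is accompanied by a power of $z$'' --- is asserted, not carried out, and term-by-term bookkeeping cannot prove it for $m\ge 2$. Indeed, $\lambda$ enters the expansion only through the top elementary symmetric function of the shifted weights $\lambda_j-\bar\lambda$ (one has $e_1=0$, the intermediate $e_k$ are the $\lambda$-free $t^2,\dotsc,t^m$, and $e_{m+1}=\pm\lambda$), so a degree count can only bound the $\lambda$-degree by $1/(m+1)$ times the total degree in the shifted weights. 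A term of the Gaussian expansion with $n$ vertex factors of valences $k_1,\dotsc,k_n\ge 3$ carries $z^{K-n}$ with $K=\tfrac12\sum_a k_a$, while its total degree in the shifted weights can be as large as $n+(m-1)K$ (one per vertex factor, $m-1$ per covariance); the required inequality $(n+(m-1)K)/(m+1)\le K-n$ is equivalent to $(m+2)n\le\sum_a k_a$, which is automatic only for $m\le 1$. So for $m\ge 2$ the bound holds only because of cancellations that symmetrization produces, which is precisely why the paper does not argue this way.

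The paper instead proves existence by induction on the $z$-order, using from the oscillating integrals only the ring statement that the $z^i$-coefficient $R'_i$ lies in $\lambda^{-i}k_{A_{m+1}}[Q_0,\dotsc,Q_m,\Delta_0^{-1/2},\dotsc,\Delta_m^{-1/2},\lambda]$: the $z^i$-part of the connection equation determines the off-diagonal entries of $R'_i$ from $R'_{i-1}$ (hence they admit the limit, since $\xi$ has distinct diagonal entries and $\Psi^{-1}\partial_{t^1}\Psi$ is $\lambda$-free); the diagonal part of the $z^{i+1}$-coefficient determines $\partial_{t^1}$ of the diagonal of $R'_i$; and the remaining constant-in-$t^1$ ambiguity is excluded because its ``denominator'' can only be a power of $\lambda$ below $i$ (all products of $\Delta_j$ depend on $t^1$), which is incompatible with the homogeneity. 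Your fallback sketch is close to this but does not close the gap as stated: the symplectic condition pins down the diagonal of $R'_i$ only for even $i$ (for odd $i$ the diagonal part of $\sum_{a+b=i}(-1)^bR'_a(R'_b)^t=0$ contains no $R'_i$-term), so you still need the next-order diagonal part of the connection equation plus the homogeneity-and-denominator argument; and your final appeal to ``the degree count of the oscillating integral'' is exactly the unproven claim of your primary route, whereas all that is needed --- and all that is available --- from the integrals is Laurent polynomiality in $\lambda$.
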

\begin{proof}
  The $\PP^m$-$R$-matrix satisfies the homogeneity property
  \begin{equation*}
    z \frac{\mathrm dR_{\PP^m}}{\mathrm dz} + (m + 1) q \frac{\mathrm dR_{\PP^m}}{\mathrm dq} + \sum_{i = 0}^m \lambda_i \frac{\mathrm dR_{\PP^m}}{\mathrm d\lambda_i} = 0.
  \end{equation*}
  So $R'(z) := R_{\PP^m}(z/\lambda)$ written with the $A_{m +
    1}$-variables satisfies
  \begin{equation*}
    (m + 2) z \frac{\mathrm dR'}{\mathrm dz} + (m + 1) \lambda \frac{\mathrm dR'}{\mathrm d\lambda} + \sum_{\mu = 1}^m (m + 2 - \mu)t^\mu \frac{\mathrm dR'}{\mathrm dt^\mu} = 0.
  \end{equation*}

  The main differential equation satisfied by $R'$ is
  \begin{equation*}
    [R', \xi] + z\left(1 + \frac{t^1}\lambda\right) \frac{\mathrm dR'}{\mathrm dt^1}
    + z\left(1 + \frac{t^1}\lambda\right)\Psi^{-1} \frac{\mathrm d\Psi}{\mathrm dt^1} R' = 0.
  \end{equation*}
  From the expression of $R_{\PP^m}$ in terms of oscillating integrals
  we know that the entries of the $z^i$-part $R'_i$ of $R'$ live in
  \begin{equation*}
    \lambda^{-i} k_{A_{m + 1}}[Q_0, \dotsc, Q_m, \Delta_0^{-1/2}, \dotsc, \Delta_m^{-1/2}, \lambda].
  \end{equation*}
  To show that the limit exists we need to show that $\lambda$ occurs
  in no positive power. We will show this by induction by $i$. It
  certainly holds for $R'_0 = 1$. Since $\xi$ is diagonal with
  pairwise distinct entries $Q_j$, the $z^i$-part of the differential
  equation determines the off-diagonal coefficients of $R'_i$ in terms
  of $R'_{i - 1}$. Because $\Psi^{-1} \frac{\mathrm d\Psi}{\mathrm
    dt^1}$ does not depend on $\lambda$, the off-diagonal coefficients
  of $R'_i$ will admit the limit $\lambda \to \infty$. Since
  $\Psi^{-1} \frac{\mathrm d\Psi}{\mathrm dt^1}$ in general vanishes
  on the diagonal the diagonal coefficient of the $z^{i + 1}$-part of
  the differential equation determines the diagonal of $\frac{\mathrm
    dR'_i}{\mathrm dt^1}$ from the off-diagonal entries of
  $R'_i$. Apart from a possible term constant in $t^1$ we therefore
  know that also the diagonal entries of $R'_i$ admit the limit.

  Let us consider such a possible ambiguity $a_i$. Since all products
  of $\Delta_j$ have dependence in $t^1$, the ``denominator'' of $a_i$
  can only be a power of $\lambda$ less than $i$. However then $a_i$
  cannot possibly satisfy the homogeneity. By induction therefore the
  limit $R$ exists. The properties of $R$ easily follow from the
  corresponding ones of $R'$.
\end{proof}

By inserting the vector field $\frac\partial{\partial t^1}$ into
\eqref{eq:qdercan} and similar arguments as in the proof of
Lemma~\ref{lem:limit} one can show the following lemma.
\begin{lem}
  The $A_{m + 1}$-$R$-matrix is uniquely determined from the differential
  equation
  \begin{equation*}
    [R_{A_{m + 1}}, \xi] + z \frac{\mathrm dR_{A_{m + 1}}}{\mathrm dt^1} + z\Psi^{-1} \frac{\mathrm d\Psi}{\mathrm dt^1} R_{A_{m + 1}} = 0,
  \end{equation*}
  the homogeneity
  \begin{equation*}
    z \frac{\mathrm dR_{A_{m + 1}}}{\mathrm dz} + \sum_{\mu = 1}^m \frac{m + 2 - \mu}{m + 2} t^\mu \frac{\mathrm dR_{A_{m + 1}}}{\mathrm dt^\mu} = 0
  \end{equation*}
  and that the entries of the $z$-series coefficients of $R_{A_{m + 1}}$
  should lie in
  \begin{equation*}
    k_{A_{m + 1}}[Q_0, \dotsc, Q_m, \Delta_0^{-1/2}, \dotsc, \Delta_m^{-1/2}].
  \end{equation*}
\end{lem}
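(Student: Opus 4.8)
The plan is to prove \emph{uniqueness}: existence of an $R$-matrix with the three listed properties is already guaranteed by the oscillating-integral construction of Section~\ref{sec:osc:spin} together with Proposition~\ref{prop:fund}, so it suffices to show that any two series $R,\tilde R\in\End(V)[\![z]\!]$ satisfying the differential equation, the homogeneity and the ring membership must coincide. Writing $R=\sum_{i\ge 0}R_iz^i$ and $\tilde R=\sum_{i\ge 0}\tilde R_iz^i$, both normalized by $R_0=\tilde R_0=1$, I would argue by induction on $i$ that $R_i=\tilde R_i$. The engine is the $z$-graded form of the differential equation: extracting the coefficient of $z^i$ from $[R_{A_{m+1}},\xi]+z\,\mathrm dR_{A_{m+1}}/\mathrm dt^1+z\,\Psi^{-1}(\mathrm d\Psi/\mathrm dt^1)R_{A_{m+1}}=0$ gives $[R_i,\xi]=-\bigl(\mathrm dR_{i-1}/\mathrm dt^1+\Psi^{-1}(\mathrm d\Psi/\mathrm dt^1)R_{i-1}\bigr)$, whose right-hand side depends only on $R_{i-1}$.

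Since $\xi=\diag(Q_0,\dots,Q_m)$ has pairwise distinct entries, the commutator $[R_i,\xi]$ has $(j,k)$-entry $(R_i)_{jk}(Q_k-Q_j)$, which vanishes exactly on the diagonal. Hence the $z^i$-equation \emph{determines the off-diagonal part} of $R_i$ from $R_{i-1}$ alone, and the inductive hypothesis $R_{i-1}=\tilde R_{i-1}$ forces the off-diagonal parts of $R_i$ and $\tilde R_i$ to agree. The diagonal of the $z^i$-equation carries no information about $R_i$ (the commutator is zero there), so to reach the diagonal of $R_i$ I would pass to the coefficient of $z^{i+1}$; on the diagonal $[R_{i+1},\xi]_{jj}=0$, so $R_{i+1}$ drops out and one is left with $(\mathrm dR_i/\mathrm dt^1)_{jj}+(\Psi^{-1}(\mathrm d\Psi/\mathrm dt^1)R_i)_{jj}=0$. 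Because $\Psi^{-1}\,\mathrm d\Psi/\mathrm dt^1$ vanishes on the diagonal (as recorded in the proof of Lemma~\ref{lem:limit}), the second term involves only off-diagonal entries of $R_i$, which are already pinned down. Thus $\mathrm d(R_i)_{jj}/\mathrm dt^1$ is expressed through the already-determined off-diagonal part, and $\mathrm d[(R_i)_{jj}-(\tilde R_i)_{jj}]/\mathrm dt^1=0$: the difference $D_i:=R_i-\tilde R_i$ is diagonal with $t^1$-independent entries.

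It remains to kill this $t^1$-independent ambiguity, and this is the \emph{main obstacle}, exactly as in Lemma~\ref{lem:limit}. The homogeneity equation $z\,\mathrm dR/\mathrm dz+\sum_\mu\frac{m+2-\mu}{m+2}t^\mu\,\mathrm dR/\mathrm dt^\mu=0$ forces $E(R_i)=-iR_i$, so $D_i$ is homogeneous of Euler-degree $-i<0$; moreover $D_i$ lies in $k_{A_{m+1}}[Q_0,\dots,Q_m,\Delta_0^{-1/2},\dots,\Delta_m^{-1/2}]$. I would show that a nonzero $t^1$-independent element of this ring cannot have negative degree. Among the generators only the $\Delta_j^{-1/2}$ carry negative Euler-degree, so a negative-degree element must genuinely involve them, and the point is that such contributions are incompatible with $t^1$-independence. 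Concretely, I would fix generic values of $t^2,\dots,t^m$ and study the behaviour as $t^1\to\infty$. Dominant balance in $f_t'(Q_j)=0$ gives $Q_j\sim\zeta_j(t^1)^{1/(m+1)}$ with $\zeta_j^{m+1}=-1$, and $\Delta_j=\prod_{k\neq j}(Q_j-Q_k)\sim c_j(t^1)^{m/(m+1)}$ with $c_j\neq 0$.

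Under this scaling $t^1$, $Q_j$ and $\Delta_j$ all acquire a $t^1$-growth order equal to $\frac{m+2}{m+1}$ times their Euler-degree, whereas the frozen variables $t^2,\dots,t^m$ occur only with nonnegative powers, contributing nonnegative Euler-degree but zero growth. Hence any monomial of total Euler-degree $-i$ has growth order at most $\frac{m+2}{m+1}(-i)<0$, so it tends to $0$; as there are finitely many monomials of a fixed degree, the whole (necessarily $t^1$-constant) diagonal entry $(D_i)_{jj}$ tends to $0$ and therefore vanishes identically. This forces $D_i=0$, completing the induction and yielding $R=\tilde R$. I expect the growth-order bookkeeping of the last step — matching the Euler grading against the $t^1\to\infty$ asymptotics of the algebraic functions $Q_j$ and $\Delta_j$ — to be the only genuinely delicate point; everything else is the same order-by-order recursion used for Lemma~\ref{lem:limit}.
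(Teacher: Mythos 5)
Your proposal is correct and takes essentially the same approach as the paper, which proves this lemma only by pointing to ``similar arguments as in the proof of Lemma~\ref{lem:limit}'': the identical order-by-order recursion (off-diagonal entries of $R_i$ from the $z^i$-coefficient, the $t^1$-derivative of the diagonal entries from the diagonal of the $z^{i+1}$-coefficient, using that $\Psi^{-1}\,\mathrm d\Psi/\mathrm dt^1$ vanishes on the diagonal), with the remaining $t^1$-constant diagonal ambiguity excluded by combining the homogeneity with the ring condition. Your $t^1\to\infty$ growth estimate is a careful, rigorous elaboration of the terse degree-counting remark by which the paper kills the analogous ambiguity in the proof of Lemma~\ref{lem:limit}, not a genuinely different route.
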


The lemmas imply that the modified $\PP^m$-$R$-matrix contains only
non-positive powers of $\lambda$ and the part constant in $\lambda$
equals the $A_{m + 1}$-$R$-matrix. Therefore the $A_{m + 1}$-relations
are contained in the modified $\PP^m$-relations as the
$\lambda^0$-part, and we have completed the proof of
Theorem~\ref{thm:limit}.

\section{Equivalence of relations} \label{sec:equiv}

We want to give a proof of Theorem~\ref{thm:equiv} in this section.
So we will consider the CohFTs in the Airy limit, i.e. with all
$t^\mu$ but $t := t^1$ set to zero. In this limit the metric becomes
$\eta(X^i, X^j) = \delta_{i + j, m}$, the quantum product stays
semisimple and the Euler vector field for the $A_m$-singularity
\begin{equation*}
  E = \frac{m + 1}{m + 2} t \frac\partial{\partial t}
\end{equation*}
is a multiple of $X$.

Rewriting \eqref{eq:qdeRP} for the $\PP^m$-$R$-matrix $\tilde
R_{\PP^m} = \Psi R_{\PP^m} \Psi^{-1}$ written in flat coordinates
gives
\begin{align*}
  [\tilde R_{\PP^m}, \xi] - zqL_E \tilde R_{\PP^m} + zq\tilde
  R_{\PP^m} \mu = 0,
\end{align*}
where $\xi$ is multiplication by $E$ in flat coordinates and $\mu =
-(L_E \Psi) \Psi^{-1}$.

We need to find a series $\varphi$ in $t$ and an $R$-matrix $R$
sending the modified $A_{m + 1}$-theory to equivariant $\PP^m$:
\begin{equation*}
  \tilde R_{\PP^m}(z) = R(z) \tilde R_{A_{m + 1}}(z\varphi).
\end{equation*}

We know that $\tilde R_{A_{m + 1}}$ satisfies
\begin{equation*}
  [\tilde R_{A_{m + 1}}, \xi] + zL_E \tilde R_{A_{m + 1}} - z\tilde R_{A_{m + 1}}\mu = 0
\end{equation*}
and the weighted homogeneity condition
\begin{equation*}
  \left(z\frac{\mathrm d}{\mathrm dz} + L_E\right) \tilde R_{A_{m + 1}} + [\mu, \tilde R_{A_{m + 1}}] = 0.
\end{equation*}
Putting these together we find that $R$ must satisfy
\begin{multline*}
  0 = [R, \xi] - zqL_ER + zq \frac{L_E \varphi}\varphi R\mu \\
  + \frac 1\varphi \left(q + \varphi - q \frac{L_E \varphi}\varphi\right)R [\tilde R_{A_{m + 1}}(z\varphi), \xi]\tilde R_{A_{m + 1}}^{-1}(z\varphi).
\end{multline*}
\begin{lem} \label{lem:notpol}
  The series $\tilde R_{A_{m + 1}} \xi \tilde R_{A_{m + 1}}^{-1}$ is
  not a polynomial in $z$.
\end{lem}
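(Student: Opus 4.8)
The lemma asserts that the conjugated multiplication operator $\tilde R_{A_{m+1}} \xi \tilde R_{A_{m+1}}^{-1}$ is not polynomial in $z$. This is a negative (non-vanishing) statement, so the natural strategy is to extract one explicit coefficient of a power of $z$ and show it does not terminate. Since $\tilde R_{A_{m+1}} = 1 + O(z)$, I would expand
\begin{equation*}
  \tilde R_{A_{m+1}} \xi \tilde R_{A_{m+1}}^{-1}
  = \xi + z[\tilde R_1, \xi] + O(z^2),
\end{equation*}
writing $\tilde R_{A_{m+1}} = 1 + \tilde R_1 z + \tilde R_2 z^2 + \dotsb$. The plan is to show that infinitely many coefficients of this series are nonzero, and the cleanest route is to use the differential equation and homogeneity characterizing $\tilde R_{A_{m+1}}$ rather than the oscillating-integral formula directly.

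**The approach via the defining equations.** I would argue by contradiction: suppose $\tilde R_{A_{m+1}} \xi \tilde R_{A_{m+1}}^{-1}$ were a polynomial in $z$, say of degree $N$. The key input is the homogeneity relation $\bigl(z\frac{\mathrm d}{\mathrm dz} + L_E\bigr)\tilde R_{A_{m+1}} + [\mu, \tilde R_{A_{m+1}}] = 0$ together with the main differential equation $[\tilde R_{A_{m+1}}, \xi] + zL_E\tilde R_{A_{m+1}} - z\tilde R_{A_{m+1}}\mu = 0$ recalled just above the lemma. From the main equation one reads off $[\tilde R_{A_{m+1}}, \xi] = -z(L_E\tilde R_{A_{m+1}} - \tilde R_{A_{m+1}}\mu)$, so the commutator with $\xi$ is exactly one power of $z$ higher than $\tilde R_{A_{m+1}}$ itself. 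Conjugating, $\tilde R_{A_{m+1}}\xi\tilde R_{A_{m+1}}^{-1} - \xi = [\tilde R_{A_{m+1}},\xi]\tilde R_{A_{m+1}}^{-1}$, and substituting the main equation turns the polynomiality hypothesis into a constraint that the formal series $z(L_E\tilde R_{A_{m+1}} - \tilde R_{A_{m+1}}\mu)\tilde R_{A_{m+1}}^{-1}$ truncates. I would then feed this truncation back into the recursion that the differential equation imposes on the coefficients $\tilde R_i$ and derive that $\tilde R_i$ must itself stabilize or vanish for large $i$, contradicting that $\tilde R_{A_{m+1}}$ is a genuine (non-terminating) element of the symplectic loop group.

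**Making the contradiction concrete in low rank.** The surest way to nail the non-termination is to exhibit it in the simplest case and then invoke the structural compatibility proved in Section~\ref{sec:limit}. In the $A_2$ case ($m=1$), the entries of $\tilde R_{A_{m+1}}$ are governed by the hypergeometric series displayed in Section~\ref{sec:osc:spin}, whose coefficients $\frac{(6i)!}{(3i)!(2i)!}$ are manifestly nonzero for all $i$; hence the off-diagonal entry of $[\tilde R_{A_{m+1}},\xi]$, and therefore of the conjugate, has nonzero coefficient at every order of $z$. Since $\xi$ is diagonal with distinct entries $Q_j$, the commutator $[\tilde R_{A_{m+1}},\xi]$ has off-diagonal $(j,k)$-entry equal to $(Q_k - Q_j)(\tilde R_{A_{m+1}})_{jk}$, which vanishes only if the corresponding entry of $\tilde R_{A_{m+1}}$ does. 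Thus I reduce the lemma to showing some off-diagonal entry of $\tilde R_{A_{m+1}}$ is not a polynomial in $z$ — and that fails only if $\tilde R_{A_{m+1}}$ were itself a finite symplectic transformation, which the explicit asymptotic expansions forbid.

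**Main obstacle.** The genuine difficulty is ruling out miraculous cancellation in the conjugation: even if $\tilde R_{A_{m+1}}$ has all orders nonzero, a priori $\tilde R_{A_{m+1}}\xi\tilde R_{A_{m+1}}^{-1}$ could conceivably telescope to a polynomial. The robust way around this is to avoid computing the full conjugate and instead work with $[\tilde R_{A_{m+1}},\xi]\tilde R_{A_{m+1}}^{-1}$ through the differential equation, where the factor of $z$ in front of $L_E\tilde R_{A_{m+1}} - \tilde R_{A_{m+1}}\mu$ guarantees the series genuinely grows in $z$-degree in step with $\tilde R_{A_{m+1}}$; polynomiality would then force $\tilde R_{A_{m+1}}$ to be polynomial, contradicting the symplectic condition $R(z)R^t(-z)=1$ for a nontrivial element of the loop group together with the explicit non-terminating expansion. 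I expect the only delicate point to be verifying that the relevant matrix entry — rather than some cancelling combination — is the one that survives, which the distinctness of the $Q_j$ in $\xi$ makes available.
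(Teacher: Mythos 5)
Your strategy stalls at exactly the point you flag as the ``main obstacle,'' and the way you propose to get around it is not a valid inference. You correctly note that non-polynomiality of $\tilde R_{A_{m+1}}$ or of $[\tilde R_{A_{m+1}},\xi]$ says nothing by itself about
$P := \tilde R_{A_{m+1}}\,\xi\,\tilde R_{A_{m+1}}^{-1} = \xi + [\tilde R_{A_{m+1}},\xi]\,\tilde R_{A_{m+1}}^{-1}$,
since the right factor $\tilde R_{A_{m+1}}^{-1}$ could truncate the product. But your resolution --- that polynomiality of $P$ would ``force $\tilde R_{A_{m+1}}$ to be polynomial,'' contradicting the symplectic condition --- is false as an abstract implication: $P$ determines $\tilde R_{A_{m+1}}$ only up to right multiplication by series commuting with $\xi$ (this is precisely the ambiguity $\exp(a_1z + a_3z^3 + \dotsb)$ of Proposition~\ref{prop:fund}), so a non-polynomial $R$-matrix can conjugate $\xi$ to a polynomial --- indeed to $\xi$ itself, if it commutes with $\xi$. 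Nothing in your argument rules out such cancellation for the specific matrix at hand, so both your general argument and your ``concrete'' $A_2$ computation (which establishes non-polynomiality of $[\tilde R_{A_2},\xi]$, not of the conjugate) leave the lemma open. The paper closes this gap by never arguing through $R$ at all: from the differential equation and homogeneity of $\tilde R_{A_{m+1}}$ it derives a closed ODE for $P$ itself,
\begin{equation*}
  [P,\xi] = z^2\frac{\mathrm dP}{\mathrm dz} - z[P,\mu], \qquad P|_{z=0} = \xi,
\end{equation*}
observes that homogeneity confines the coefficient $P_i$ of $z^i$ to the cyclic $(i-1)$-th diagonal, and inducts: the coefficient-by-coefficient form of this ODE expresses differences of consecutive entries on the $i$-th diagonal of $P_{i+1}$ as the multiples $(i + \mu_{jj} - \mu_{kk})(P_i)_{jk}$ of entries of $P_i$, and these multipliers are nonzero because every entry of $\mu$ has absolute value less than $\tfrac 12$; hence $P_i \neq 0$ forces $P_{i+1} \neq 0$, starting from $P_0 = \xi \neq 0$.

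Two further problems. First, your low-rank reduction only treats $m = 1$, while the lemma is needed for every $m$ (it is what pins down $\varphi$ in Section~\ref{sec:equiv}); Section~\ref{sec:limit} compares $\PP^m$ with $A_{m+1}$ for the \emph{same} $m$, so it provides no mechanism for transporting the statement from $A_2$ to $A_{m+1}$. Second, a notational slip: in the context of this lemma $\tilde R_{A_{m+1}} = \Psi R_{A_{m+1}}\Psi^{-1}$ is written in flat coordinates, where $\xi$ is not diagonal but the cyclic matrix $\xi_{jk} = t\frac{m+1}{m+2}\delta_{j,k+1}(-t)^{\delta_{0,j}}$ appearing in the proof of Lemma~\ref{lem:fund}; the diagonal form with distinct entries proportional to the $Q_j$ lives in the normalized idempotent basis. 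This is repairable, since conjugating by the $z$-independent $\Psi$ does not affect polynomiality in $z$, but as written your entry-by-entry computation mixes the two bases.
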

Because of the lemma and the homogeneity of $\tilde R_{A_{m + 1}}$ we
see that in order for $R$ to exist in the limit $\disc \to 0$ the
function $\varphi$ has to satisfy
\begin{equation*}
  q + \varphi - q \frac{L_E \varphi}\varphi = 0
\end{equation*}
or equivalently
\begin{equation*}
  -q^{-1} = \varphi^{-1} + L_E\varphi^{-1}.
\end{equation*}
There is a unique solution $\varphi^{-1}$ to this differential
equation. Concretely, we have
\begin{equation*}
  \varphi^{-1}
  = \lambda^{-1} \sum_{i = 0}^\infty \frac{m + 2}{m + 2 + i(m + 1)} \left(-\frac t\lambda\right)^i.
\end{equation*}
Since it is not necessary for the proof of Theorem~\ref{thm:equiv}, we
will prove Lemma~\ref{lem:notpol} in Section~\ref{sec:noequiv}.

Let us from now on assume that $\varphi$ is this solution. Then the
differential equation for $R$ spells
\begin{equation} \label{eq:de:comp}
  [R, \xi] - zqL_ER + zq \frac{L_E \varphi}\varphi R\mu = 0.
\end{equation}

The following lemma implies that the matrix $\tilde R_{\PP^m}(z)
\tilde R_{A_{m + 1}}^{-1}(z\varphi)$ does not have any poles in $t$
and this concludes the proof of Theorem~\ref{thm:equiv}.
\begin{lem} \label{lem:fund}
  For any solution $R(z)$ of \eqref{eq:de:comp} of the form
  \begin{equation*}
    R(z)  = \sum_{i = 0}^\infty (R_{jk}^i) z^i = 1 + O(z),
  \end{equation*}
  for Laurent series $R_{jk}^i$ in $t$, actually all the $R_{jk}^i$
  have to be polynomials.
\end{lem}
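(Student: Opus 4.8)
The plan is to treat \eqref{eq:de:comp} as a first-order linear ODE in $z$, with the Euler derivative $L_E$ playing the role of differentiation in $t$, and to solve it order by order in $z$. Writing $R = \sum_{i \geq 0} R^i z^i$ with $R^0 = 1$ and collecting the coefficient of $z^i$ yields a recursion of the shape
\[
  [R^i, \xi] = q\, L_E R^{i-1} - q\,\frac{L_E\varphi}{\varphi}\,R^{i-1}\mu =: B^i,
\]
so that the right-hand side depends only on $R^{i-1}$. In the Airy limit the matrix $\xi$ of multiplication by $E$ is, up to the scalar $\tfrac{m+1}{m+2}t$, the companion matrix of $X^{m+1}+t$; it has entries polynomial in $t$ and, away from $\disc = 0$, pairwise distinct eigenvalues proportional to the critical points $Q_j$. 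First I would record the two structural facts that keep the recursion tractable: $\mu = -(L_E\Psi)\Psi^{-1}$ is the constant grading operator in flat coordinates, and $L_E\varphi/\varphi = 1 + \varphi/q$ is a scalar power series that is regular at $t=0$ and in fact vanishes there. Hence $B^i$ is assembled from $R^{i-1}$ by operations that do not of themselves create poles in $t$.

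The operator $\mathrm{ad}_\xi\colon M \mapsto [M,\xi]$ has kernel equal to the commutant of $\xi$, namely the polynomials in $\xi$, and it is invertible on the complementary off-diagonal part, where it acts by division by eigenvalue differences. Exactly as in the proof of Lemma~\ref{lem:limit}, the $z^i$-part of the equation therefore determines the off-diagonal part of $R^i$ from $R^{i-1}$, while the commutant part of $R^i$ is left free at this order and is fixed, up to an additive term constant in $t$, by the solvability of the next order (vanishing of the commutant component of $B^{i+1}$). That commutant part is recovered by integrating a polynomial source along $E = \tfrac{m+1}{m+2}t\,\partial_t$, an operation that preserves polynomiality once the necessary vanishing of the $L_E$-invariant part of the source is supplied by solvability, while the surviving additive constant is trivially polynomial. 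The induction thus reduces to the single claim that applying $\mathrm{ad}_\xi^{-1}$ to $B^i$ introduces no poles in $t$.

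I would establish this by induction on $i$, carrying as hypothesis that $R^{i-1}$ is polynomial. The delicate point is that $\mathrm{ad}_\xi^{-1}$ divides by differences of eigenvalues of $\xi$, and $\prod_{j<k}(Q_j - Q_k)^2$ equals $\disc$ up to a unit, which in the Airy limit is supported on $t = 0$; a priori this produces poles along $\disc = 0$. The task is therefore to show that the off-diagonal part of $B^i$, in the eigenbasis of $\xi$, vanishes to high enough order as the $Q_j$ collide for the quotient to remain regular. This is the precise analogue of the step in Lemma~\ref{lem:limit} where a possible $\lambda$-denominator of the diagonal ambiguity had to be excluded, with the power of $\lambda$ there replaced by the pole order along $\disc$ here. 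I expect this vanishing to be the main obstacle: it must be read off from the interplay between the companion structure of $\xi$, the constancy of $\mu$, and the regularity of $L_E\varphi/\varphi$ at $t=0$, and then propagated through the recursion. Once the pole order along $\disc$ is shown not to grow with $i$ and is bounded by the homogeneity weight of each entry, the $\disc^{-1}$-localized Laurent coefficients are forced to be genuine polynomials, which completes the induction and the proof.
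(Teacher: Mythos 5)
Your skeleton is sound: expanding in powers of $z$, using the $z^i$-part of \eqref{eq:de:comp} to determine $R^i$ modulo the commutant of $\xi$, and using the $z^{i+1}$-part to pin down the commutant part, is exactly the structure of the paper's argument, and your structural observations are correct and even relevant (that $\mu$ is a constant diagonal matrix, and that $L_E\varphi/\varphi = 1 + \varphi/q$ vanishes at $t = 0$, so the source term $B^i$ is pole-free and in fact divisible by $t$). But there is a genuine gap precisely where you announce "the main obstacle": you propose to invert $\mathrm{ad}_\xi$ in the eigenbasis of $\xi$, which divides by the eigenvalue differences $\sim Q_j - Q_k$, and in the Airy limit \emph{all} eigenvalues collide at $t = 0$ --- exactly the locus where poles must be excluded. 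You never prove the required vanishing of the off-diagonal (eigenbasis) components of $B^i$ along $\disc = 0$; you only assert that it "must be read off" from the structure and "propagated through the recursion." Since ruling out poles at $t=0$ is the entire content of the lemma, this is a plan, not a proof. Two further problems with this route: the transition matrix $\Psi$ itself involves $\sqrt{\Delta_j}$ and $Q_j \sim (-t)^{1/(m+1)}$, so eigenbasis entries are Puiseux rather than Laurent series in $t$ and your pole bookkeeping is not even cleanly formulated there; and your closing criterion (pole order "not growing with $i$" and "bounded by the homogeneity weight") would not yield polynomiality anyway --- you need pole order zero, not a bound.

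The paper's proof never leaves flat coordinates and therefore never divides by eigenvalue differences. There $\xi$ is the explicit twisted cyclic shift $\xi_{jk} = \frac{m+1}{m+2}\,t\,\delta_{j, k+1}(-t)^{\delta_{0, j}}$ (indices mod $m+1$), so the $z^i$-part of \eqref{eq:de:comp} gives pole-free expressions for $R^i_{j(k+1)}\xi_{(k+1)k} - \xi_{j(j-1)}R^i_{(j-1)k}$; chaining these along the diagonals $j - k$ constant mod $(m+1)$ determines every $R^i_{jk}$ from the first-column entries $R^i_{(j-k)0}$ modulo pole-free terms, the only division being by $t$ (harmless thanks to the divisibility of the source by $t$ that you yourself noted). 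The residual freedom --- one function per diagonal class, i.e.\ per kernel element of $\mathrm{ad}_\xi$ --- is then killed by the $z^{i+1}$-part, whose commutant component says that
\begin{equation*}
  (m+1)\,t\,\frac{\mathrm d R^i_{j0}}{\mathrm dt} + j\,R^i_{j0}
\end{equation*}
is pole-free; since this operator multiplies $t^a$ by $(m+1)a + j$, which is nonzero for every $a < 0$ and $0 \le j \le m$, all negative Laurent coefficients of $R^i_{j0}$ must vanish (and only $R^i_{00}$ retains an additive constant). This injectivity-on-negative-powers argument is the mechanism for excluding poles that your proposal lacks; supplying it, or an eigenbasis substitute for it, is not a routine verification but the heart of the lemma.
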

\begin{proof}
  The matrices $\xi$ and $\mu$ can be explicitly calculated
  \begin{equation*}
    \xi_{jk} = t \frac{m + 1}{m + 2} \delta_{j, k + 1} (-t)^{\delta_{0, j}}, \qquad
    \mu_{jk} = \frac{2j - m}{2(m + 2)} \delta_{j, k},
  \end{equation*}
  where all indices are understood modulo $(m + 1)$.

  Assume that we have already constructed $R^{i - 1}$ and its entries
  have no negative powers in $t$. Looking at the $z^i$-part of
  \eqref{eq:de:comp} gives expressions for $R_{j(k + 1)}^i\xi_{(k +
    1)k} - \xi_{j(j - 1)} R_{(j - 1)k}^i$ as power series with no
  poles in $t$. From here we see that if we can determine the
  $R_{j0}^i$ as power series with no poles, then the other entries are
  given by
  \begin{equation*}
    R_{jk}^i \equiv (-t)^{\delta_{k > j}} R_{(j - k)0}^i
  \end{equation*}
  modulo terms with no poles in $t$, determined from $R^{i - 1}$. The
  exponent $\delta_{k > j}$ is $1$ for $k > j$ and $0$ otherwise.

  From the $z^{i + 1}$-part of \eqref{eq:de:comp} we then get
  expressions with no poles in $t$ for
  \begin{equation*}
    (m + 1)t \frac{\mathrm dR_{j0}^i}{\mathrm dt} + jR_{j0}^i,
  \end{equation*}
  thus determining all $R_{j0}^i$ but $R_{00}^i$ up to a
  constant. Therefore all the $R_{jk}^i$ are polynomials in $t$.
\end{proof}
\begin{rem}
  The derivation in this section would have worked the same if $q$ was
  any other invertible power series in $t$.
\end{rem}

\section{Higher dimensions} \label{sec:noequiv}

We would like to show that for $m > 1$ there is no pair of function
$\varphi$ and matrix power series $R(z)$, both well-defined in the
limit $\disc \to 0$, such that
\begin{equation} \label{eq:rprod}
  \tilde R_{\PP^m}(z) = R(z) \tilde R_{A_{m + 1}}(z\varphi),
\end{equation}
where again $\tilde R_* = \Psi R_* \Psi^{-1}$. We will need to assume
that that $\varphi$ is well-defined in the Airy limit. Then we can use
the discussion from Section~\ref{sec:equiv} to derive that $\varphi$
is of the form
\begin{equation*}
  \varphi = \lambda + c_0 \lambda^0 + c_{-1} \lambda^{-1} + \dotsb,
\end{equation*}
where the $c_i$ are independent of $\lambda$ and $c_{-1}$ in the Airy
limit becomes a constant multiple of $(t^1)^2$. For the uniqueness of
$\varphi$ we needed Lemma~\ref{lem:notpol}.
\begin{proof}[Proof of Lemma~\ref{lem:notpol}]
  Recall that we have to show that $P := \tilde R_{A_{m + 1}} \xi
  \tilde R_{A_{m + 1}}^{-1}$ is not a polynomial in $z$. From the
  differential equation for $\tilde R_{A_{m + 1}}$ we obtain a
  differential equation for $P$.
  \begin{equation*}
    [P, \xi] = z^2 \frac{\mathrm dP}{\mathrm dz} - z[P, \mu]
  \end{equation*}
  By definition we also have the initial condition $P|_{z = 0} =
  \xi$. Write $P = \xi + zP_1 + z^2P_2 + \dotsb$. The homogeneity
  condition for $\tilde R_{A_{m + 1}}$ implies that the only nonzero
  entries of $P_i$ are at the $(i - 1)$-th diagonal, where by this we
  mean the entries on $j$-th row, $k$-th column such that $k - j
  \equiv i - 1 \pmod{m + 1}$.

  Assume we have shown that $P_i \neq 0$ has a nonzero entry on the $(i
  - 1)$-th diagonal row. Recalling the proof of Lemma~\ref{lem:fund} we
  see that essentially the differences of two subsequent entries in
  the $i$-th diagonal of $P_{i + 1}$ are a multiple of an entry on the
  $(i - 1)$-th diagonal of $P_i$. Since the absolute value of any entry
  of $\mu$ is less than $\frac 12$, all of these multiples are
  nonzero. Therefore it is impossible for all entries on the $i$-th
  diagonal of $P_{i + 1}$ to be zero. The lemma follows by induction.
\end{proof}

To show that there is no suitable intermediate $R$-matrix $R$ it will
be enough to consider the $z^1$-term of \eqref{eq:rprod}. It says
\begin{equation*}
  \tilde r_{\PP^m} = r + \varphi \tilde r_{A_{m + 1}},
\end{equation*}
where $r_*$ stands for the $z^1$-term of $R_*$. Since $\tilde
r_{\PP^m}$ has no negative powers in $\lambda$, the
$\lambda^{-1}$-terms on the right hand side have to cancel. However
the bottom-left coefficient of $\tilde r_{A_{m + 1}}$ has a pole in
the discriminant. Since for $m > 2$ the coefficient $c_{-1}$ cannot be
a multiple of the discriminant for degree reasons, in this case $r$
has to have a pole in the discriminant. Contradiction.

It remains to look at the case $m = 2$. Here it is similarly enough
to show that there is one coefficient in the $R$-matrix with a second
order pole in the discriminant in order to derive a contradiction. We
look at the coefficient $r_{20}$ calculated from the oscillating
integral of Section~\ref{sec:osc:spin}. We need to calculate the
$z^1$-coefficient of the asymptotic expansion of
\begin{equation*}
 \sum_Q \frac 1{\sqrt{2\pi}\Delta} \int\limits_{-\infty}^\infty
  \exp\left(-\frac{x^2}2 - x^3\sqrt{-z}\frac Q{\Delta^{3/2}}- \frac{x^4}4 (-z) \frac 1{\Delta^2}\right) \mathrm dx,
\end{equation*}
where we sum over roots $Q$ of the polynomial defining the singularity
and here $\Delta = 3Q^2 + 2t^2$. Expanding the Gaussian integral we
find the coefficient to be equal to
\begin{equation*}
  -\sum_Q \frac{15}2 \frac{Q^2}{\Delta^4} + \sum_Q \frac 3{\Delta^3}.
\end{equation*}
It is straightforward to check that the first summand equals
\begin{equation*}
  -\frac{15}2 \frac{-2(2t^2)^3 + 27(t^1)^2}{(-4(2t^2)^3 - 27(t^1)^2)^2},
\end{equation*}
whereas the second term has only a first order pole in the
discriminant.

\appendix
\section{Givental's localization calculation}
\label{sec:givenloc}

We want to recall Givental's localization calculation \cite{Gi01a},
which proves that the CohFT from equivariant $\PP^m$ can be obtained
from the trivial theory via a specific $R$-matrix action. We first
recall localization in the space of stable maps to $\PP^m$ in
Section~\ref{sec:locdefs}. Next, in Section~\ref{sec:locgen} we group
the localization contributions according to the dual graph of the
source curve. We collect identities following from the string and
dilaton equation in Section~\ref{sec:SD} before applying them to
finish the computation in Section~\ref{sec:loctofrob}.

\subsection{Localization in the space of stable maps}
\label{sec:locdefs}

Let $T = (\CC^*)^{m + 1}$ act diagonally on $\PP^m$. The equivariant
Chow ring of a point and $\PP^m$ are given by
\begin{align*}
  A^*_{T} (\mathrm{pt})
  \cong& \QQ[\lambda_0, \dotsc, \lambda_m] \\
  A^*_{T} (\PP^m)
  \cong& \QQ[H, \lambda_0, \dotsc, \lambda_m]/\prod_{i = 0}^m (H - \lambda_i),
\end{align*}
where $H$ is a lift of the hyperplane class. Furthermore, let $\eta$ be
the equivariant Poincaré pairing.

There are $m + 1$ fixed points $p_0, \dotsc, p_m$ for the $T$-action
on $\PP^m$. The characters of the action of $T$ on the tangent space
$T_{p_i}\PP^m$ are given by $\lambda_i - \lambda_j$ for $j
\not= i$. Hence the corresponding equivariant Euler class $e_i$
is given by
\begin{equation*}
  e_i = \prod_{j \neq i} (\lambda_i - \lambda_j).
\end{equation*}
The equivariant class $e_i$ also serves as the inverse of the norms
of the equivariant (classical) idempotents
\begin{equation*}
  \phi_i = e_i^{-1} \prod_{j \neq i} (H - \lambda_j).
\end{equation*}

The virtual localization formula \cite{GrPa99} implies that the
virtual fundamental class can be split into a sum
\begin{align*}
  [\Mbar_{g, n}(\PP^m, d)]^{vir}_T
  = \sum_X \iota_{X, *} \frac{[X]^{vir}_T}{e_T(N^{vir}_{X, T})}
\end{align*}
of contributions of fixed loci $X$. Here $N^{vir}_{X, T}$ denotes the
virtual normal bundle of $X$ in $\Mbar_{g, n}(\PP^m, d)$ and $e_T$ the
equivariant Euler class. Because of the denominator, the fixed point
contributions are only defined after localizing by the elements
$\lambda_0, \dotsc, \lambda_m$.  By studying the $\CC^*$-action on
deformations and obstructions of stable maps, $e_T(N^{vir}_{X, T})$
can be computed explicitly.

The fixed loci can be labeled by certain decorated graphs. These
consist of
\begin{itemize}
\item a graph $(V, E)$,
\item an assignment $\zeta: V \to \{p_0, \dotsc, p_m\}$ of fixed points,
\item a genus assignment $g: V \to \ZZ_{\ge 0}$,
\item a degree assignment $d: E \to \ZZ_{> 0}$,
\item an assignment $p: \{1, \dotsc n\} \to V$ of marked points,
\end{itemize}
such that the graph is connected and contains no self-edges, two
adjacent vertices are not assigned to the same fixed point and we have
\begin{equation*}
  g = h^1(\Gamma) + \sum_{v \in V} g(v), \qquad
  d = \sum_{e \in E} d(e).
\end{equation*}
A vertex $v \in V$ is called stable if $2g(v) - 2 + n(v) > 0$, where
$n(v)$ is the number of outgoing edges at $v$.

The fixed locus corresponding to a graph is characterized by the
condition that stable vertices $v \in V$ of the graph correspond to
contracted genus $g(v)$ components of the domain curve, and that edges
$e \in E$ correspond to multiple covers of degree $d(e)$ of the torus
fixed line between two fixed points. Such a fixed locus is isomorphic
to a product of moduli spaces of curves
\begin{equation*}
  \prod_{v \in V} \Mbar_{g(v), n(v)}
\end{equation*}
up to a finite map.

For a fixed locus $X$ corresponding to a given graph the Euler class
$e_T(N^{vir}_{X, T})$ is a product of factors corresponding to the
geometry of the graph
\begin{multline}
  \label{eq:normal}
  e_T(N^{vir}_{X, T}) = \prod_{v\text{, stable}} \frac{e(\mathbb E^*
    \otimes T_{\PP^m, \zeta(v)})}{e_{\zeta(v)}}
  \prod_{\text{nodes}} \frac{e_{\zeta}}{-\psi_1 - \psi_2} \\
  \prod_{\substack{g(v) = 0 \\ n(v) = 1}} (-\psi_v) \prod_e
  \mathrm{Contr}_e.
\end{multline}
In the first product $\mathbb E^*$ denotes the dual of the Hodge
bundle, $T_{\PP^m, \zeta(v)}$ is the tangent space of $\PP^m$ at
$\zeta(v)$, and all bundles and Euler classes should be considered
equivariantly. The second product is over nodes forced onto the domain
curve by the graph. They correspond to stable vertices together with
an outgoing edge, or vertices $v$ of genus $0$ with $n(v) = 2$. With
$\psi_1$ and $\psi_2$ we denote the (equivariant) cotangent line
classes at the two sides of the node. For example, the equivariant
cotangent line class $\psi$ at a fixed point $p_i$ on a line mapped
with degree $d$ to a fixed line is more explicitly given by
\begin{equation*}
  -\psi = \frac{\lambda_j - \lambda_i}d,
\end{equation*}
where $p_j$ is the other fixed point on the fixed line. The explicit
expressions for the terms in the second line of \eqref{eq:normal} can
be found in \cite{GrPa99}, but will play no role for us. It is only
important that they only depend on local data.

\subsection{General procedure}
\label{sec:locgen}

We set $W$ to be $A^*_{T} (\PP^m)$ with all equivariant parameters
localized. For $v_1, \dotsc, v_n \in W$ the (full) CohFT
$\Omega_{g, n}$ from equivariant $\PP^m$ is defined by
\begin{multline}
  \label{eq:PmfullCohFT}
  \Omega_{g, n}^p(v_1, \dotsc, v_n) \\
  = \sum_{d, k = 0}^\infty \frac{q^d}{k!}
  \varepsilon_* \pi_*\left(\prod_{i = 1}^n \ev_i^*(v_i) \prod_{i = n + 1}^{n + k}
    \ev_i^*(p) \cap [\Mbar_{g, n + k}(\PP^m, d)]^{vir}\right),
\end{multline}
where $p$ is a point on the formal Frobenius manifold, $\varepsilon$
forgets the last $k$ markings and $\pi$ forgets the map. We want to
calculate the push-forward via virtual localization. In the end we
will arrive at the formula of the $R$-matrix action as described in
Section~\ref{sec:Rmatrixaction}. In the following we will
systematically suppress the dependence on $p$ in the notation.

We start by remarking that for each localization graph for
\eqref{eq:PmfullCohFT} there exists a dual graph of $\Mbar_{g, n}$
corresponding to the topological type of the stabilization of a
generic source curve of that locus. What gets contracted under the
stabilization maps are trees of rational curves. There are three types
of these unstable trees:
\begin{enumerate}
\item those which contain one of the $n$ markings and are connected to
  a stable component,
\item those which are connected to two stable components and contain
  none of the $n$ markings and
\item those which are connected to one stable
component but contain none of the $n$ markings.
\end{enumerate}
These give rise to series of localization contributions and we want to
record those, using the fact that they already occur in genus 0.

Let $W'$ be an abstract free module over the same base ring as $W$
with a basis $w_0, \dotsc, w_m$ labeled by the fixed points of the
$T$-action on $\PP^m$. The type 1 contributions are recorded by
\begin{equation*}
  \tilde R^{-1} = \sum_i \tilde R_i^{-1} w_i \in \Hom(W, W')[\![z]\!],
\end{equation*}
the homomorphism valued power series such that
\begin{equation*}
  \tilde R_i^{-1}(v) = \eta(e_i\phi_i, v) + \sum_{d, k = 0}^\infty \frac{q^d}{k!} \sum_{\Gamma \in G_{d, k, i}^1} \frac 1{\Aut(\Gamma)} \Contr_\Gamma(v)
\end{equation*}
where $G_{d, k, i}^1$ is the set of localization graphs for
$\Mbar_{0, 2 + k}(\PP^m, d)$ such that the first marking is at a
valence 2 vertex at fixed point $i$ and $\Contr_\Gamma(v)$ is the
contribution for graph $\Gamma$ for the integral
\begin{equation*}
  \int_{\Mbar_{0, 2 + k}(\PP^m, d)} \frac{e_i}{-z - \psi_1} \ev_2^*(v) \prod_{l = 3}^{2 + k} \ev_l^*(p).
\end{equation*}
We define the integral in the case $(d, k) = (0, 0)$ to be zero and
will do likewise for other integrals over non-existing moduli spaces.

The type 2 contributions are recorded by the bivector
\begin{equation*}
  \tilde V = \sum_i \tilde V^{ij} w_i \otimes w_j \in W'^{\otimes 2} [\![z, w]\!]
\end{equation*}
which is defined by
\begin{equation*}
  V^{ij} = \sum_{d, k = 0}^\infty \frac{q^d}{k!} \sum_{\Gamma \in G_{d, k, i, j}^2} \frac 1{\Aut(\Gamma)} \Contr_\Gamma,
\end{equation*}
where $G_{d, k, i, j}^2$ is the set of localization graphs for
$\Mbar_{0, 2 + k}(\PP^m, d)$ such that the first and second marking
are at valence 2 vertices at fixed points $i$ and $j$, respectively,
and $\Contr_\Gamma$ is the contribution for graph $\Gamma$ for the
integral
\begin{equation*}
  \int_{\Mbar_{0, 2 + k}(\PP^m, d)} \frac{e_ie_j}{(-z - \psi_1)(-w - \psi_2)} \prod_{l = 3}^{2 + k} \ev_l^*(p).
\end{equation*}
Finally, the type 3 contribution is a vector
\begin{equation*}
  \tilde T = \sum_i \tilde T_i w_i \in W'[\![z]\!]
\end{equation*}
which is defined by
\begin{equation*}
  \tilde T_i = p + \sum_{d, k = 0}^\infty \frac{q^d}{k!} \sum_{\Gamma \in G_{d, k, i}^3} \frac 1{\Aut(\Gamma)} \Contr_\Gamma
\end{equation*}
where $G_{d, k, i}^3$ is the set of localization graphs for
$\Mbar_{0, 1 + k}(\PP^m, d)$ such that the first marking is at a
valence 2 vertex at fixed point $i$ and $\Contr_\Gamma(v)$ is the
contribution for graph $\Gamma$ for the integral
\begin{equation*}
  \int_{\Mbar_{0, 1 + k}(\PP^m, d)} \frac{e_i}{-z - \psi} \prod_{l = 2}^{1 + k} \ev_l^*(p).
\end{equation*}

With these contributions we can write the CohFT already in a form
quite similar to the reconstruction formula. Let $\omega_{g, n}$ be
the $n$-form on $W'$ which vanishes if $w_i$ and $w_j$ for $i \neq j$
are inputs, which satisfies
\begin{equation*}
  \omega_{g, n}(w_i, \dotsc, w_i) = \frac{e(\mathbb E^* \otimes T_{\PP^m, p_i})}{e_i} = e_i^{g - 1} \prod_{j \neq i} c_{\lambda_j - \lambda_i}(\mathbb E)
\end{equation*}
and which is for $n = 0$ defined similarly as in
Example~\ref{ex:trivial}. We have
\begin{equation}
  \label{eq:locresA}
  \Omega_{g, n}^p(v_1, \dotsc, v_n) = \sum_{\Gamma} \frac 1{\Aut(\Gamma)} \xi_*\left(\prod_v \sum_{k = 0}^\infty \frac 1{k!} \varepsilon_* \omega_{g_v, n_v + k}(\dots)\right),
\end{equation}
where we put
\begin{enumerate}
\item $\tilde R^{-1}(\psi)(v_i)$ into the argument corresponding to marking $i$,
\item a half of $\tilde V(\psi_1, \psi_2)$ into an argument
  corresponding to a node and
\item $\tilde T(\psi)$ into all additional arguments.
\end{enumerate}

We will still need to apply the string and dilaton equation in order
to make $\tilde T(\psi)$ to be a multiple of $\psi^2$, like the
corresponding series in the reconstruction, and then relate the series
to the $R$-matrix.

\subsection{String and Dilaton Equation}
\label{sec:SD}

We want to use the string and dilaton equation to bring a series
\begin{equation}
  \label{eq:SDstart}
  \sum_{k = 0}^\infty \frac 1{k!}
  \varepsilon_*\left( \prod_{i = 1}^n \frac 1{-x_i - \psi_i} \prod_{i =
      n + 1}^{n + k}
    Q(\psi_i)\right),
\end{equation}
where $\varepsilon: \Mbar_{g, n + k} \to \Mbar_{g, n}$ is the
forgetful map and $Q = Q_0 + zQ_1 + z^2Q_2 + \dotsb$ is a formal
series, into a canonical form.

By the string equation, \eqref{eq:SDstart} is annihilated by
\begin{equation*}
  \mathcal L' = \mathcal L + \sum_{i = 1}^n \frac 1{x_i},
\end{equation*}
where $\mathcal L$ is the string operator
\begin{equation*}
  \mathcal L
  = \frac\partial{\partial Q_0} - Q_1 \frac\partial{\partial Q_0} - Q_2 \frac\partial{\partial Q_1} - Q_3 \frac\partial{\partial Q_2} - \dotsb.
\end{equation*}
Moving along the string flow for some time $-u$, i.e. applying
$e^{t\mathcal L'}|_{t = -u}$, to \eqref{eq:SDstart} gives
\begin{equation*}
  \sum_{k = 0}^\infty \frac 1{k!}
  \varepsilon_*\left( \prod_{i = 1}^n \frac{e^{-\frac u{x_i}}}{-x_i - \psi_i} \prod_{i =
      n + 1}^{n + k}
    Q'(\psi_i)\right),
\end{equation*}
for a new formal series $Q' = Q'_0 + zQ'_1 + z^2Q'_2 + \dotsb$. In the
case that
\begin{equation*}
  u
  = \sum_{k = 1}^\infty \frac 1{k!} \int\limits_{\Mbar_{0, 2 + k}} \prod_{i = 3}^{2 + k} Q(\psi_i),
\end{equation*}
which we will assume from now on, the new series $Q'$ will satisfy
$Q'_0 = 0$ since by the string equation $\mathcal Lu = 1$ and
therefore applying $e^{t\mathcal L}|_{t = -u}$ to $u$ gives on the one
hand zero and on the other hand the definition of $u$ with $Q$
replaced by $Q'$, and for dimension reasons this is a nonzero multiple
of $Q'_0$.

Next, by applying the dilaton equation we can remove the linear part
from the series $Q'_0$
\begin{multline}
  \label{eq:SDstable}
  \sum_{k = 0}^\infty \frac 1{k!}
  \varepsilon_*\left( \prod_{i = 1}^n \frac 1{-x_i - \psi_i} \prod_{i =
      n + 1}^{n + k}
    Q(\psi_i)\right) \\
  = \sum_{k = 0}^\infty \frac{\Delta^{\frac{2g - 2 + n + k}2}}{k!}
  \varepsilon_*\left( \prod_{i = 1}^n \frac{e^{-\frac u{x_i}}}{-x_i - \psi_i} \prod_{i =
      n + 1}^{n + k}
    Q''(\psi_i)\right),
\end{multline}
where $Q'' = Q' - Q'_1z$ and
\begin{equation*}
  \Delta^{\frac 12} = (1 - Q'_1)^{-1} = \sum_{k = 0}^\infty \frac 1{k!} \int\limits_{\Mbar_{0, 3 + k}} \prod_{i = 4}^{3 + k} Q(\psi_i).
\end{equation*}

We will also need identities in the degenerate cases $(g, n) = (0, 2)$
and $(g, n) = (0, 1)$. In the first case, there is the identity
\begin{equation}
  \label{eq:sflow2}
  \frac 1{-z - w} + \sum_{k = 1}^\infty \frac 1{k!} \int\limits_{\Mbar_{0, 2 + k}} \frac 1{-z - \psi_1} \frac 1{-w - \psi_2} \prod_{i = 3}^{2 + k} Q(\psi_i)
  = \frac{e^{-u/z + -u/w}}{-z - w}.
\end{equation}
In order to see that \eqref{eq:sflow2} is true, we use that the left
hand side is annihilated by $\mathcal L + \frac 1z + \frac 1w$ in order
to move from $Q$ to $Q'$ via the string flow and notice that there all
of the integrals vanish for dimension reasons. Similarly, there is the
identity
\begin{multline}
  \label{eq:sflowT}
  1 - \frac{Q(z)}{z} - \frac 1z \sum_{k = 2}^\infty \frac 1{k!} \int\limits_{\Mbar_{0, 1 + k}} \prod_{i = 2}^{1 + k} Q(\psi_i) \\
  = e^{-u/z} \left(1 - \frac{Q'(z)}{z}\right) = e^{-u/z} \left(\Delta^{-\frac 12} - \frac{Q''(z)}{z}\right),
\end{multline}
which can be proven like the previous identity by using that the left
hand side is annihilated by $\mathcal L + \frac 1z$.

We define the functions $u_i$ and $(\Delta_i/e_i)^{\frac 12}$ for
$i \in \{0, \dotsc, m\}$ to be the $u$ and $\Delta^{\frac 12}$ at the
points $Q = \tilde T_i$ from the previous section.

\subsection{Expressing localization series in terms of Frobenius
  structures}
\label{sec:loctofrob}

We apply \eqref{eq:SDstable} to \eqref{eq:locresA} and obtain
\begin{equation}
  \label{eq:locresB}
  \Omega_{g, n}^p(v_1, \dotsc, v_n) = \sum_{\Gamma} \frac 1{\Aut(\Gamma)} \xi_*\left(\prod_v \sum_{k = 0}^\infty \frac 1{k!} \varepsilon_* \omega'_{g_v, n_v + k}(\dots)\right),
\end{equation}
where we put
\begin{enumerate}
\item $R^{-1}(\psi)(v_i)$ into the argument corresponding to marking $i$,
\item a half of $V(\psi_1, \psi_2)$ into an argument
  corresponding to a node and
\item $T(\psi)$ into all additional arguments.
\end{enumerate}
Here $R^{-1}$, $V$ and $T$ are defined exactly as $\tilde R^{-1}$,
$\tilde V$ and $\tilde T$ but with the replacement
\begin{equation*}
  \frac{e_i}{-x - \psi} \rightsquigarrow \frac{e_i e^{-\frac{u_i}x}}{-x - \psi}
\end{equation*}
made at the factors we put at the ends of the trees. The form
$\omega'_{g, n}$ satisfies
\begin{equation*}
  \omega'_{g, n}(w_i, \dotsc, w_i) = \Delta_i^{\frac{2g - 2 + n}2} e_i^{-\frac n2} \prod_{j \neq i} c_{\lambda_j - \lambda_i}(\mathbb E).
\end{equation*}

We now want to compute $R^{-1}$, $V$ and $T$ in terms of the
homomorphism valued series $S^{-1}(z) \in \Hom(W, W')[\![z]\!]$ with
$w_i$-component
\begin{multline*}
  S_i^{-1}(z) = \langle \frac{e_i\phi_i}{-z - \psi}, -\rangle \\
  := \eta(e_i\phi_i, -) + \sum_{d, k = 0}^\infty \frac{q^d}{k!} \int\limits_{\Mbar_{0, 2 + k}(\PP^m, d)} \frac{\ev_1^*(e_i\phi_i)}{-z - \psi_1} \ev_2^*(-) \prod_{j = 3}^{2 + k} \ev_j^*(p).
\end{multline*}

We start by computing $S^{-1}$ via localization. Using that in genus
zero the Hodge bundle is trivial we find that at the vertex with the
first marking we need to compute integrals exactly as in
\eqref{eq:sflow2}, where the first summand stands for the case that
the vertex is unstable and the $k$-summand stands for the case that
the vertex is stable with $k$ trees of type 3 and one tree of type
1. Applying \eqref{eq:sflow2} we obtain
\begin{equation*}
  S_i^{-1}(z) = e^{-\frac{u_i}z} R_i^{-1}(z).
\end{equation*}

Using the short-hand notation
\begin{multline*}
  \left\langle \frac{v_1}{x_1 - \psi}, \frac{v_2}{x_2 - \psi}, \frac{v_3}{x_3 - \psi}\right\rangle \\
  := \sum_{d, k = 0}^\infty \frac{q^d}{k!}
  \int\limits_{\Mbar_{0, 3 + k}(\PP^m, d)}
  \frac{\ev_1^* v_1}{x_1 - \psi_1} \frac{\ev_2^* v_2}{x_2 - \psi_2} \frac{\ev_3^* v_3}{x_3 - \psi_3} \prod_{i = 4}^{3 + k} \ev_i^* (p)
\end{multline*}
for genus zero Gromov-Witten invariants and applying the string
equation, we can also write
\begin{equation*}
  S_i^{-1}(z) = -\frac 1z \langle \frac{e_i\phi_i}{-z - \psi}, \mathbf 1, -\rangle.
\end{equation*}
We have by the identity axiom and WDVV equation
\begin{multline*}
  \langle \frac{e_i\phi_i}{-z - \psi}, \frac{e_j\phi_j}{-w - \psi}, \mathbf 1\rangle = \langle \frac{e_i\phi_i}{-z - \psi}, \frac{e_j\phi_j}{-w - \psi}, \bullet\rangle \langle \bullet, \mathbf 1, \mathbf 1\rangle \\
  = \langle \frac{e_i\phi_i}{-z - \psi}, \mathbf 1, \bullet\rangle \langle \bullet, \mathbf 1, \frac{e_j\phi_j}{-w - \psi}\rangle,
\end{multline*}
where in the latter two expressions the $\bullet$ should be filled with
$\eta^{-1}$, so that
\begin{multline}
  \label{eq:SetaS}
  \frac{S_i^{-1}(z) \eta^{-1} S_j^{-1}(w)^t}{-z - w} \\
  = \frac{\eta(e_i\phi_i, e_j\phi_j)}{-z - w} + \sum_{d, k = 0}^\infty \frac{q^d}{k!} \int\limits_{\Mbar_{0, 2 + k}(\PP^m, d)} \frac{\ev_1^*(e_i\phi_i)}{-z - \psi_1} \frac{\ev_2^*(e_j\phi_j)}{-w - \psi_2} \prod_{l = 3}^{2 + k} \ev_l^*(p).
\end{multline}
We compute the right hand side via localization. There are two cases
in the localization depending on whether the first and second marking
are at the same or a different vertex. In the first case we apply
\eqref{eq:sflow2} at this common vertex and obtain the total
contribution
\begin{equation*}
  \frac{e_i\delta_{ij} e^{-\frac{u_i}z - \frac{u_j}w}}{-z - w},
\end{equation*}
which includes the unstable summand. In the other case, we apply
\eqref{eq:sflow2} at the two vertices and obtain
\begin{equation*}
  e^{-\frac{u_i}z - \frac{u_j}w} V^{ij}(z, w).
\end{equation*}
So all together
\begin{equation*}
  V^{ij}(z, w) = \frac{R_i^{-1}(z)\eta^{-1}R_j^{-1}(w)^t - e_i\delta_{ij}}{-z - w}.
\end{equation*}

Finally we express $T$ in terms of $R$ by computing
\begin{equation*}
  S_i^{-1}(z)\mathbf 1 = e_i - \frac 1z\sum_{d, k = 0}^\infty \frac{q^d}{k!} \int\limits_{\Mbar_{0, 1 + k}(\PP^m, d)} \frac{\ev_1^*(e_i\phi_i)}{-z - \psi_1} \prod_{j = 2}^{1 + k} \ev_j^*(p)
\end{equation*}
via localization. Applying \eqref{eq:sflowT} at the first marking we
find that
\begin{equation*}
  S_i^{-1}(z)\mathbf 1 = e^{-\frac{u_i}z} \left(\Delta_i^{-\frac 12} e_i^{\frac 12} - \frac{T_i(z)}z\right).
\end{equation*}
So
\begin{equation*}
  T(z) = z\left(\sum_i \Delta_i^{-\frac 12} e_i^{\frac 12}w_i - R^{-1}(z) \mathbf 1\right).
\end{equation*}

By \eqref{eq:locresB} the underlying TQFT of $\Omega^p_{g, 0}$ is
given by
\begin{equation*}
  \sum_i \Delta_i^{g - 1}.
\end{equation*}
This implies that the $\Delta_i$ need to be the inverses of the norms
of the idempotents for the quantum product of equivariant $\PP^m$
(because these are pairwise different). Since $\tilde T$ vanishes at
$(p, q) = 0$, $\Delta_i$ is $e_i^{-1} $ at $(p, q) \to 0$. Therefore
we can identify $W'$ with $W$ by mapping $w_i$ to
$\sqrt{\Delta_i/e_i}$ times the idempotent element which coincides
with $\phi_i$ at $(p, q) = 0$. The previous results then say exactly
that $\Omega^p$ is obtained from the CohFT $\omega'$ by the action of
the $R$-matrix $R$. In turn, Example~\ref{ex:mumford} implies that
$\omega'$ is obtained from the TQFT by the action of an $R$-matrix
which is diagonal in the basis of idempotents and has entries
\begin{equation*}
  \exp\left(\sum_{i = 1}^\infty \frac{B_{2i}}{2i(2i - 1)} \sum_{j \neq i} \left(\frac z{\lambda_j - \lambda_i}\right)^{2i - 1}\right).
\end{equation*}

We still need to check that $R$ satisfies the quantum differential
equation and has the correct limit \eqref{eq:limit} as $q \to 0$.  By
considering \eqref{eq:SetaS} as $w + z \to 0$ we see that $S^{-1}(z)$
satisfies the symplectic condition, i.e. its inverse $S(z)$ is the
adjoint with respect to $\eta$ of $S^{-1}(-z)$. More explicitly the
evaluation of $S(z)$ at the $i$th normalized idempotent is the vector
\begin{equation*}
  \left\langle \frac{\sqrt{e_i}\phi_i}{z - \psi}, \eta^{-1}\right\rangle.
\end{equation*}
By the genus 0 topological recursion relations for any flat vector
field $X$
\begin{equation*}
  z\left\langle X, \frac{\sqrt{e_i}\phi_i}{z - \psi}, \eta^{-1}\right\rangle = \langle X, \eta^{-1}, \bullet\rangle \left\langle \bullet, \frac{\sqrt{e_i}\phi_i}{z - \psi}\right\rangle,
\end{equation*}
where again $\eta^{-1}$ should be inserted at the $\bullet$. Therefore
$S$ satisfies the quantum differential equation
\begin{equation*}
  z XS(z) = X \star S(z),
\end{equation*}
where on the left hand side the action of vector fields and the right
hand side quantum multiplication is used.

At $q = 0$, we can check that $R$ becomes the identity matrix and
therefore the $R$-matrix of $\Omega^p$ becomes the $R$-matrix of the
CohFT $\omega'$, which has the correct limit \eqref{eq:limit}.

\printbibliography
\addcontentsline{toc}{section}{References}

\vspace{+8 pt}
\noindent
Departement Mathematik \\
ETH Zürich \\
felix.janda@math.ethz.ch

\end{document}